\definecolor{Burgundy}{RGB}{144,0,32}
\newtheorem{tm}{Theorem}[section]
\newtheorem{pr}[tm]{Proposition}
\newtheorem{lm}[tm]{Lemma}
\newtheorem{co}[tm]{Corollary}
\newtheorem{df}[tm]{Definition}
\newtheorem{rem}[tm]{Remark}
\newcommand{\Q}{{\mathbb Q}}
\newcommand{\z}{{\zeta}}
\newcommand{\oset}[3][0ex]{%
  \mathrel{\mathop{#3}\limits^{
    \vbox to#1{\kern-2\ex@
    \hbox{$\scriptstyle#2$}\vss}}}}
\begin{document}

\pagestyle{plain}
\title{Higher Euler-Kronecker Constants of Number fields}
\author{Samprit Ghosh}
\address{Mathematical Sciences 468, University of Calgary, \newline 2500 University Drive NW, Calgary, Alberta, T2N 1N4, Canada. }
\email{samprit.ghosh@ucalgary.ca}
\thanks{}
\subjclass[2020]{Primary 11R42 ; Secondary 11M06, 11M38, 11M20, 11M36}
\keywords{Euler-Kronecker constants, Dedekind Zeta function, explicit formula}
\begin{abstract}
The higher Euler-Kronecker constants of a number field $K$ are the coefficients in the Laurent series expansion of the logarithmic derivative of the Dedekind zeta function about $s=1$. These coefficients are mysterious and seem to contain a lot of arithmetic information. In this article, we study these coefficients. We prove arithmetic formulas satisfied by them and prove bounds. We generalize certain results of Ihara.
 \end{abstract}
\maketitle
\tableofcontents

\section{Introduction} 
The famous \emph{Euler's constant}, denoted by $\gamma$ is defined as 
\begin{equation*}
	\gamma = \lim_{x \rightarrow \infty} \left( \sum \limits_{n \leq x } \frac{1}{n} \; - \; \log x \right).
\end{equation*}
This constant contains a lot of arithmetic information and appears in several areas of mathematics. Its connection to the Riemann zeta function is perhaps the most profound one, namely,  $\gamma$ appears as the constant term in the Laurent series expansion of the Riemann zeta function, at $s=1$. 
\[
\zeta(s) = \frac{1}{s-1} + \gamma + O (s-1).
\]
For any number field $K$, Ihara in \cite{ihara1} introduced a generalization of $\gamma$  as the constant term appearing in the Laurent series expansion of the logarithmic derivative of the Dedekind zeta function, at $s=1$. 
\[
\frac{\zeta_K'}{\zeta_K}(s) = -\frac{1}{s-1} + \gamma_K + O (s-1).
\]
The constant $\gamma_K$ is called the Euler-Kronecker constant. Ihara then goes on to show interesting formulae satisfied by $\gamma_K$ and deduces important bounds. More recently similar results has also been proved by Dixit and Murty in \cite{anupram}, using more elementary methods.

In this article we study the higher coefficients that appear in the Laurent series expansion of $\frac{\zeta_K'}{\zeta_K}(s)$ about $s=1$. We will refer to these as \emph{higher Euler-Kronecker constants}. Our results join a small but growing number of works which signal a new perspective in number theory, namely, the importance of all the Taylor coefficients of zeta and $L$-functions and not just the leading term. In a different context, this perspective has been most remarkably explored in the recent Annals paper \cite{shtuka} of Yun and Zhang. They show that, at a central critical point, the $n$th Taylor coefficient of an $L$-function for PGL$(2)$ over a function field, has a geometric interpretation for every $n$. Admittedly, the results of this article are in a different domain altogether, however, they share a common element, namely, the importance of all Taylor coefficients. The author hopes that this study will inspire future mathematicians to look at these higher coefficients more closely.
\subsection*{Acknowledgements.} We thank Prof. V. Kumar Murty for his valuable suggestions and encouragement. We thank Prof. David Rohrlich for reading a previous version of this article and sharing valuable feedback. Based on his remarks, several corrections and improvements have been made.

\subsection{Background and Motivation} \label{NFprelim}
Let $K$ be an algebraic number field with degree $[K: \mathbb{Q}] = n_K$.  The Dedekind zeta function of $K$ is defined as 
\begin{equation*}
	\z_K(s) = \sum_{\mathfrak{a}} \frac{1}{(N \mathfrak{a})^s}
\end{equation*}
for Re$(s)>1$, where the sum is taken over all integral ideals $\mathfrak{a}$ of the ring of integers $ \mathcal{O}_K$.
Hecke showed that $(s-1)\z_K(s)$ extends to an entire function. There is a simple pole of $\z_K(s)$ at $s=1$ and the residue satisfies the famous \emph{class number formula}  
$$\lim_{s \rightarrow 1} \; (s-1)\z_K(s) = \dfrac{2^{r_1} (2 \pi)^{r_2} h R}{\omega \sqrt{|d_K|}} \; ,$$
where $r_1$ denotes the number of real embeddings of $K$, $2r_2$ is the number of complex embeddings, $h$ is the class number, $R$ is the regulator, $\omega$ is the number of roots of unity, and $d_K$ is the discriminant of $K$. For the logarithmic derivative one can write  
\begin{equation}\label{eq:vonMangoldtg}
	- \dfrac{\z_K'(s)}{\z_K(s)} = \sum_{\mathfrak{a}} \dfrac{\Lambda(\mathfrak{a})}{(N {\mathfrak{a})^s}} \; .
\end{equation}	
Here $\Lambda(.)$ is the number field analogue of the von Mangoldt function given by 
	\[
	\Lambda(\mathfrak{a}) = \begin{cases} \log \; N\mathfrak{p} \;\; & \text{ if } \mathfrak{a} = \mathfrak{p}^k \; \text{for some prime ideal } \mathfrak{p} \\ 0 & \text{ otherwise.}
	\end{cases}
	\]
	By applying the Tauberian theorem to \eqref{eq:vonMangoldtg}, one can deduce the number field analogue of the prime number theorem, namely the prime ideal theorem.
	$$ \pi_K(x) \sim \frac{x}{\log x} \;\;\; \text{ as } x \rightarrow \infty \; , $$
	where $\pi_K(x)$ denotes the number of prime ideals of $\mathcal{O}_K$ with norm less than or equal to $x$. For details on the above discussion, one may refer to any standard textbook on analytic number theory, for example \cite{dave}, \cite{cassels} or \cite{mur}. \vspace{2mm} \\ The \textbf{generalized Riemann hypothesis} (GRH) states that all non-trivial zeros (i.e. those in the critical strip) of the Dedekind zeta function is on the $s = \frac{1}{2}$ line.\\
	 
	 Consider the completed zeta function
	\begin{equation*}
		\xi_K(s) = s(s-1) 2^{r_2}\left( \frac{\sqrt{|d_K|} }{2^{r_2} \pi^{n_K/2}}\right)^{s} \Gamma\left(\frac{s}{2}\right)^{r_1} \Gamma(s)^{r_2} \zeta_K(s) \; , 
	\end{equation*}
	where  $[K : \Q] = n_K$. In \cite{licoeff}, Li introduced the following sequence of numbers $\{ \lambda_n \}$, now known as Li's coefficients. 
	\begin{equation}
		\lambda_n = \frac{1}{(n-1)!} \frac{d^n}{ds^n} \left(s^{n-1} \log \xi_K(s) \right) \Big|_{s=1} \text{ \hspace{1cm} for } n \geq 1, 
	\end{equation}
	and showed the following theorem. 
	\begin{tm} \textbf{(Li's Criterion)} 
		The general Riemann hypothesis for $\z_K(s)$ holds if and only if $\; \lambda_n$ is non-negative for all $n \geq 1$.
	\end{tm}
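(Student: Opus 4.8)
\medskip
\noindent\textbf{Proof strategy.}
The plan is to first replace Li's differential-operator definition of $\lambda_n$ by a closed expression as a sum over the nontrivial zeros of $\z_K$, and then read off the sign of that expression. Since $(s-1)\z_K(s)$ is entire and $\xi_K$ is entire of order $1$ satisfying $\xi_K(s)=\xi_K(1-s)$, with zero set exactly the multiset $\{\rho\}$ of nontrivial zeros of $\z_K$ and $\xi_K(0)=\xi_K(1)\ne0$, its Hadamard factorization reads $\xi_K(s)=\xi_K(0)\prod_\rho\bigl(1-s/\rho\bigr)$ in the symmetric sense (grouping $\rho$ with $1-\rho$ and with $\overline\rho$). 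Feeding $\log\xi_K$ into the operator $g\mapsto\tfrac{1}{(n-1)!}\tfrac{d^n}{ds^n}\bigl(s^{n-1}g(s)\bigr)\big|_{s=1}$, expanding via the substitution $s=1/(1-z)$, and using the functional equation to symmetrize, I would derive the standard identity
\begin{equation}\label{eq:li-zeros}
\lambda_n=\sum_\rho\left(1-\left(1-\frac1\rho\right)^{n}\right),
\end{equation}
the sum over zeros with multiplicity, taken in order of increasing $|\operatorname{Im}\rho|$ so that conjugate pairs are grouped (which is what makes the $n=1$ term converge). Everything then hinges on the M\"obius map $\rho\mapsto 1-1/\rho$, which carries $\operatorname{Re}(s)=\tfrac12$ onto the unit circle, $\operatorname{Re}(s)>\tfrac12$ into $|w|<1$, and $\operatorname{Re}(s)<\tfrac12$ into $|w|>1$.

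The direction ``GRH $\Rightarrow\lambda_n\ge0$'' should be routine. Under GRH every zero is $\rho=\tfrac12+i\gamma$, and a one-line computation gives $|1-1/\rho|=1$, so $1-1/\rho=e^{i\theta_\rho}$ for some real $\theta_\rho$. Grouping $\rho$ with $\overline\rho$ in \eqref{eq:li-zeros}, and using $\overline{1-1/\rho}=1-1/\overline\rho$, each such pair contributes $(1-e^{in\theta_\rho})+(1-e^{-in\theta_\rho})=2\bigl(1-\cos n\theta_\rho\bigr)\ge0$, while a possible zero at $s=\tfrac12$ contributes $1-(-1)^n\ge0$; summing gives $\lambda_n\ge0$.

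For the converse, ``$\lambda_n\ge0$ for all $n\ \Rightarrow$ GRH'', I would work with the generating function $F(z)=\sum_{n\ge1}\lambda_nz^n$. Summing the geometric-type series termwise in \eqref{eq:li-zeros} (using $1-(1-1/\rho)=1/\rho$) gives
\begin{equation}\label{eq:genfn}
F(z)=\frac{z}{1-z}\sum_\rho\frac{1/\rho}{1-(1-1/\rho)z}.
\end{equation}
Isolating the conditionally convergent part $\sum_\rho1/\rho$ and leaving the absolutely convergent remainder $\sum_\rho\rho^{-2}\bigl(1-(1-1/\rho)z\bigr)^{-1}$, I would show that $F$ extends to a meromorphic function on the open unit disk whose only poles there are the points $\rho/(\rho-1)$ coming from zeros $\rho$ with $\operatorname{Re}(\rho)<\tfrac12$; the remaining summands have poles only on or outside the unit circle, and since $\rho/(\rho-1)\to1$ the poles cannot accumulate inside the disk. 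Now suppose some zero has $\operatorname{Re}(\rho)<\tfrac12$. Then $F$ has a pole in $\{|z|<1\}$, so the radius of convergence $R$ of $\sum\lambda_nz^n$ lies in $(0,1)$. Since $\lambda_n\ge0$, Pringsheim's theorem forces $z=R$ to be a singular point of $F$; because $F$ is meromorphic in a neighbourhood of $z=R$, that singular point is a pole, so $R=\rho_1/(\rho_1-1)$ for some zero $\rho_1$ with $\operatorname{Re}(\rho_1)<\tfrac12$ and $\rho_1/(\rho_1-1)$ real and positive. Solving gives $\rho_1=R/(R-1)<0$, a real zero of $\z_K$ outside the critical strip --- impossible. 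Hence no zero has $\operatorname{Re}(\rho)<\tfrac12$; applying this to the zero $1-\rho$ forces $\operatorname{Re}(\rho)=\tfrac12$ for every $\rho$, i.e.\ GRH for $\z_K$.

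The main obstacle is the analytic bookkeeping in the converse: upgrading the formal identity \eqref{eq:genfn} to the precise statement that $F$ is meromorphic on $\{|z|<1\}$ with poles exactly at $\{\rho/(\rho-1):\operatorname{Re}\rho<\tfrac12\}$ means controlling the conditionally convergent $\sum_\rho1/\rho$ through the decomposition above, proving locally uniform convergence of the remainder off those poles, and invoking Pringsheim's theorem carefully. A secondary point is that \eqref{eq:li-zeros} itself depends on the functional equation, which is precisely what justifies the symmetric summation there and in \eqref{eq:genfn}.
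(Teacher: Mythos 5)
The paper does not actually prove this theorem: it is stated as a result of Li \cite{licoeff}, with the alternative proof of Bombieri--Lagarias \cite{bombli} also cited, so there is no in-paper argument to compare against. Judged on its own merits, your proposal reproduces Li's original strategy and is essentially correct. The three pillars are all in place: (i) the identity $\lambda_n=\sum_\rho\bigl(1-(1-1/\rho)^n\bigr)$ from the Hadamard factorization of $\xi_K$ together with the functional equation (which is what makes the symmetric grouping of $\rho$ with $1-\rho$ and $\overline\rho$ legitimate and the $n=1$ sum converge); (ii) the forward direction via the observation that the M\"obius map $\rho\mapsto 1-1/\rho$ carries the critical line onto the unit circle, so conjugate pairs contribute $2(1-\cos n\theta_\rho)\ge0$; and (iii) the converse via the generating function $F(z)=\sum\lambda_nz^n$, whose poles in the open unit disk are exactly the points $\rho/(\rho-1)$ with $\operatorname{Re}\rho<\tfrac12$, combined with Pringsheim's theorem to force a real pole in $(0,1)$ and hence a negative real nontrivial zero --- a contradiction. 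The analytic bookkeeping you flag as the ``main obstacle'' (upgrading the termwise-summed $F$ to a genuinely meromorphic function on the disk via the split $1/\rho\cdot(1-(1-1/\rho)z)^{-1}=\tfrac{1/\rho}{1-z}-\tfrac{z/\rho^2}{(1-z)(1-(1-1/\rho)z)}$, using $\sum|\rho|^{-2}<\infty$ and the conditional convergence of $\sum 1/\rho$) is exactly the part that needs to be written out carefully, but your decomposition is the right one and the rest of the argument is standard. By contrast, the Bombieri--Lagarias proof avoids Pringsheim entirely and derives both directions from the explicit formula and a Weil-type positivity criterion, which gives a statement valid for a broad class of multisets of zeros, not just those arising from $\xi_K$; your route is closer to Li's and is more elementary but more tied to the specific generating-function structure.
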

	Later Bombieri and Lagarias also gave an alternative proof in \cite{bombli}. Brown in \cite{brownli} proved an effective version of Li's theorem, showing non-negativity  of the first few $\lambda_i$'s, give zero-free regions of a certain shape around $s=1$. In particular he showed, just $\lambda_2 \geq 0$ implies non-existence of the exceptional \emph{Siegel zeros}. We recall, a well-known result of Stark says that $\zeta_{K}(s)$ has at most one zero in the region 
	$$1 - \frac{1}{4\log |d_K|} < \sigma < 1, \;\; |t| \leq \frac{1}{4\log |d_K|},$$ where $s = \sigma + it$. This zero, if it exists, is necessarily real and simple. We call this a Siegel zero.
	\begin{rem}
		Note that, if we write the Laurent series about $s=1$, of the logarithmic derivative of $\z_K(s)$, then $\lambda_2$ involves the constant term and the first coefficient, i.e. the coefficient of $(s-1)$, together with some terms coming from the $\Gamma$- factors . This was our primary motivation to closely study this first coefficient. We later found that many of our results easily generalized to higher coefficients.
	\end{rem}  
	\begin{df}
		Let the Laurent series of the logarithmic derivative of $\z_K(s)$ about $s=1$ be 
		\begin{equation}\label{zeta_log}
			\dfrac{\z_K'(s)}{\z_K(s)} = \dfrac{-1}{s-1}\; + \; \gamma_{K,0} \; + \; \sum_{r=1}^{\infty} \gamma_{K,r}(s-1)^r \;. 
		\end{equation}
	Then	$\gamma_{K,r}$ is called the $\mathbf{r}$\textbf{-th Euler-Kronecker constant}.
	\end{df}
	
	With this notation we have $\gamma_{K,0}=\gamma_K$ of Ihara, whereas $\gamma_{\Q,0} = \gamma$.

	It is worth pointing out that this article does not deal with the subtleties of sign of these coefficients as Li's criterion demands. Instead we present a preliminary study of them. These coefficients are from very local information at $s=1$ and yet somehow they are able to capture what is happening at $s= \frac{1}{2}$, giving us information about all zeros! We think, in future one might be able to deduce zero-free regions and other interesting results from our formulas and bounds.
	
	Before we write our formulas for $\gamma_{K,r}$, we note some similar formulas for $\zeta(s)$ and $\zeta_K(s)$ that already exists in the literature. Suppose we write 
	\begin{equation*}
		\z(s) = \frac{1}{s-1} + \gamma + \sum_{n=1}^{\infty} \mathfrak{s}_n (s-1)^n   .
	\end{equation*}
	In 1885, T. J. Stieltjes \cite{stiljets} showed that 
	\begin{equation*}
		\mathfrak{s}_n = \frac{(-1)^n}{n!} \lim_{x \rightarrow \infty } \left( \sum_{m=1}^{x} \frac{(\log m)^n}{m} - \frac{(\log x)^{n+1}}{n+1}\right).
	\end{equation*}
	These $\mathfrak{s}_n$ are called the Stieltjes constants, the generalized Euler constants or sometimes the Euler-Stieltjes constants. For the Dedekind zeta function, let us write 
	\begin{equation*}
		\z_K(s) = \sum_{n=-1}^{\infty} \mathfrak{s}_{K,n} (s-1)^n   
	\end{equation*}
	The author found a similar formula in a much recent paper and does not know if similar formula has been written down in the past. The following is due to Eddin \cite[Theorem 2]{saad}. 
	
	\begin{equation*}
		\mathfrak{s}_{K,n} = \frac{(-1)^n}{n!} \lim_{x \rightarrow \infty } \left( \sum_{N \mathfrak{a} \leq x} \frac{(\log N \mathfrak{a})^n}{N \mathfrak{a}} - 	\mathfrak{s}_{K,-1} \frac{(\log x)^{n+1}}{n+1}\right) \text{\hspace{0.5cm} for } n \geq 1,
	\end{equation*}
	and 
	\begin{equation*}
		\mathfrak{s}_{K,0} = \lim_{x \rightarrow \infty } \left( \sum_{N \mathfrak{a} \leq x} \frac{1}{N \mathfrak{a}} - 	\mathfrak{s}_{K,-1} \log x \right) + \; \mathfrak{s}_{K,-1}.
	\end{equation*}
	The following formula for the logarithmic derivative of the Riemann zeta function is also known. Let 
	\begin{equation*}
		\frac{\z'(s)}{\z(s)} = \frac{-1}{(s-1)} + \sum_{n=0}^{\infty} \gamma_n (s-1)^n  \;, 
	\end{equation*}
	then
	\begin{equation*}
		\gamma_n = \frac{(-1)^{n+1}}{n!} \lim_{x \rightarrow \infty } \left( \sum_{m<x} \frac{\Lambda(m) (\log m)^n}{m} - \frac{(\log x)^{n+1}}{n+1}\right).
	\end{equation*}
	For a proof see \cite{titch1}. The author has not found a similar formula for Dedekind zeta functions and so, is recording it as a theorem below. 
	\subsection{Statement of main results}	
\begin{tm}\label{thm:EKlimformula1}
	Let $K$ be an algebraic number field and write the Dirichlet series
	$$ - \frac{\zeta_K'(s)}{\zeta_{K}(s)} = \sum_{n=1}^{\infty} \frac{\Lambda_K(n)}{n^s}. $$
	For $r \geq 0$, the $r$-th Euler-Kronecker constant of $K$ satisfies the  arithmetic formula 
	\begin{equation}
		\gamma_{K,r} = \frac{(-1)^{r+1}}{r!} \lim_{x \rightarrow \infty} \left( \sum_{n \leq x} \frac{\Lambda_K(n) (\log n)^r}{n} - \frac{(\log x)^{r+1}}{r+1}  \right).
	\end{equation}
\end{tm}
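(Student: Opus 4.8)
The plan is to start from the partial-sum function $\psi_K(x) = \sum_{n \le x} \Lambda_K(n)$ and its smoothed relatives, and to extract the Laurent coefficients of $\zeta_K'/\zeta_K$ at $s=1$ by a standard Abel-summation/Mellin argument. Concretely, for $\mathrm{Re}(s) > 1$ we have the Dirichlet series
\begin{equation*}
	-\frac{\zeta_K'(s)}{\zeta_K(s)} = \sum_{n=1}^{\infty} \frac{\Lambda_K(n)}{n^s},
\end{equation*}
and I would differentiate $r$ times with respect to $s$ to bring down a factor $(-\log n)^r$, giving
\begin{equation*}
	\left(-\frac{\zeta_K'(s)}{\zeta_K(s)}\right)^{(r)} = (-1)^r \sum_{n=1}^{\infty} \frac{\Lambda_K(n)(\log n)^r}{n^s}.
\end{equation*}
On the other hand, from the Laurent expansion in \eqref{zeta_log} one reads off, term by term, that the $r$-th derivative of $-\zeta_K'/\zeta_K$ at a point $s$ near $1$ behaves like $\frac{(-1)^{r+1} r!}{(s-1)^{r+1}} - (-1)^r \left( r!\,\gamma_{K,r} + \text{(higher-order terms in }(s-1)) \right)$; so the $r$-th Euler--Kronecker constant $\gamma_{K,r}$ is, up to the explicit factor $(-1)^{r+1}/r!$, the regularized value at $s=1$ of $\sum_n \Lambda_K(n)(\log n)^r n^{-s}$ after subtracting its principal part.

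The key step is then to identify that regularized value with the limit in the statement. I would introduce $f(x) = \sum_{n \le x} \Lambda_K(n)(\log n)^r / n$ and write, for $\mathrm{Re}(s) > 1$,
\begin{equation*}
	\sum_{n=1}^{\infty} \frac{\Lambda_K(n)(\log n)^r}{n^s} = (s-1)\int_1^{\infty} \frac{f(x)}{x^{s}}\,dx \cdot x \Big|_{\text{via Abel summation}}
\end{equation*}
— more carefully, Abel summation against $n^{-(s-1)}$ gives $\sum_n \Lambda_K(n)(\log n)^r n^{-s} = (s-1)\int_1^\infty f(x) x^{-s}\,dx$. Comparing with the elementary identity $\frac{(\log x)^{r+1}}{r+1} = (s-1)\int_1^\infty \frac{(\log x)^{r+1}}{r+1}x^{-s}\,dx + \frac{r!}{(s-1)^{r+1}} - (\text{lower order})$, i.e. using $(s-1)\int_1^\infty (\log x)^{r+1} x^{-s} dx = (r+1)!/(s-1)^{r+1}$, I would subtract the two to see that $(s-1)\int_1^\infty \big( f(x) - \frac{(\log x)^{r+1}}{r+1}\big) x^{-s}\,dx$ has a removable singularity at $s=1$ and equals (the regularization of) $\sum_n \Lambda_K(n)(\log n)^r n^{-s}$ minus its pole, hence equals $(-1)^{r+1} r!\,\gamma_{K,r}$ as $s \to 1^+$. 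An Abelian/Tauberian theorem (or, more elementarily, monotone-type control using the prime ideal theorem, which guarantees $\psi_K(x) \sim x$ and hence $f(x) - \frac{(\log x)^{r+1}}{r+1}$ is bounded and convergent) then lets me replace the limit $s \to 1^+$ of the Dirichlet integral by the limit $x \to \infty$ of $f(x) - \frac{(\log x)^{r+1}}{r+1}$, which is exactly the claimed formula.

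The main obstacle is the interchange of limits: justifying that $\lim_{s\to 1^+}(s-1)\int_1^\infty g(x)x^{-s}\,dx = \lim_{x\to\infty} g(x)$ for $g(x) = f(x) - \frac{(\log x)^{r+1}}{r+1}$ requires knowing that this limit exists, which is not purely formal. I expect to handle it by invoking the explicit formula / prime ideal theorem with a power-saving or at least a $\log$-saving error term (available unconditionally via Landau, or cleanly under GRH), so that the error contribution to the Dirichlet integral is analytic past $s=1$; alternatively one can mimic Stieltjes's original elementary argument, estimating $\sum_{n\le x}\Lambda_K(n)(\log n)^r/n$ directly against $\int_2^x (\log t)^r/t\,dt = (\log x)^{r+1}/(r+1) + O(1)$ using partial summation on $\psi_K(t) - t$. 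Either route is routine once the bookkeeping of the $\Gamma$-factors is seen to be irrelevant here (they do not affect the Dirichlet-series side), so I would present the Abel-summation version as the cleanest, relegating the error-term estimate to a short lemma or a citation to the prime ideal theorem with error term as in \cite{dave} or \cite{mur}.
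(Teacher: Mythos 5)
Your proposal is correct in substance and follows essentially the same route as the paper. The paper carries out your ``fallback'' Stieltjes-style argument in a cleaner general form: its Propositions~\ref{mainprop} and~\ref{mainlimprop} perform Abel summation on $B(x)=\sum_{n\le x}\Lambda_K(n)=x+E(x)$ (rather than on your $f(x)$), producing for every finite $x$ an explicit identity expressing $C_r$ as $\sum_{n\le x}\Lambda_K(n)(\log n)^r/n - \tfrac{1}{r+1}(\log x)^{r+1}$ plus remainder terms built from $E$; those remainders visibly tend to $0$, so convergence and the limit formula fall out together and no Abelian theorem is invoked. Your primary route via $\lim_{s\to1^+}(s-1)\int_1^\infty g(x)x^{-s}\,dx=\lim_{x\to\infty}g(x)$ is fine, but, as you note yourself, it presupposes convergence of $g$, which must then be supplied by the same partial-summation estimate anyway, so the Abelian framing is an extra layer rather than a shortcut. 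One point to tighten: $\psi_K(x)\sim x$ does not by itself give boundedness, let alone convergence, of $g(x)=f(x)-\tfrac{1}{r+1}(\log x)^{r+1}$. After integrating by parts against $\psi_K(t)-t$, convergence requires $\int^\infty |\psi_K(t)-t|\,(\log t)^r\,t^{-2}\,dt<\infty$, so the error term must beat $(\log t)^{r+1}$; since $r$ is arbitrary, a single log-saving is not enough. The unconditional Landau estimate $\psi_K(x)=x+O\bigl(x\exp(-c\sqrt{\log x})\bigr)$ suffices, as does the power-saving hypothesis $E(x)=O(x^b)$ with $b<1$ that the paper builds directly into Proposition~\ref{mainlimprop}; you should state one of these explicitly rather than appeal to the bare asymptotic.
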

We  prove the following bounds. In the next theorem, we are using the notation $\log_m(x)$ to denote $\log_m(x) = \log ( \log( \cdots(\log x)))$.
\begin{tm}\label{thm:EKbound}
Assuming GRH, for $r \geq 1$, odd, we have
	\begin{equation}\label{boundodd}
		 \gamma_{K,r} \geq \frac{2^{r+1}}{r!} (\log r + \log_2 |d_K| + 2 \log_3 |d_K|)^{r+1} \left( \frac{-1}{r+1} + O  \left(\frac{\log r + \log_3 |d_k|}{\log_2 |d_K|}\right) \right).
	\end{equation}
	Whereas for $r \geq 2$, even, 
		\begin{equation}\label{boundeven}
		\gamma_{K,r} \leq \frac{2^{r+1}}{r!} (\log r + \log_2 |d_K| + 2 \log_3 |d_K|)^{r+1} \left( \frac{1}{r+1} + O  \left(\frac{\log r + \log_3 |d_k|}{\log_2 |d_K|}\right) \right).
	\end{equation}
\end{tm}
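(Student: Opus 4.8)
The plan is to adapt Ihara's treatment of $\gamma_{K,0}$ to the weight $(\log n)^r$, using the nonnegativity of $\Lambda_K$ together with GRH. Write $\psi_K(t)=\sum_{n\le t}\Lambda_K(n)$ and $R(t)=\psi_K(t)-t$. Starting from Theorem~\ref{thm:EKlimformula1} and applying Abel summation to $\sum_{n\le x}\Lambda_K(n)(\log n)^r/n=\int_{1^-}^{x}\tfrac{(\log t)^r}{t}\,d\psi_K(t)$, one gets for every $x>1$ the exact identity
\[
(-1)^{r+1}r!\,\gamma_{K,r}=\sum_{n\le x}\frac{\Lambda_K(n)(\log n)^r}{n}-\frac{(\log x)^{r+1}}{r+1}+\mathcal{T}_r(x),
\qquad
\mathcal{T}_r(x)=\int_{x}^{\infty}\!\!R(t)\,\frac{(\log t)^{r-1}(\log t-r)}{t^{2}}\,dt-\frac{(\log x)^{r}}{x}R(x).
\]
Since $\Lambda_K(n)\ge0$, dropping the prime sum gives $(-1)^{r+1}r!\,\gamma_{K,r}\ge-\tfrac{(\log x)^{r+1}}{r+1}-|\mathcal{T}_r(x)|$; for odd $r$ this is a lower bound for $\gamma_{K,r}$ and for even $r$ (where $(-1)^{r+1}=-1$) an upper bound, which already accounts for the dichotomy in the statement.

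The core estimate is $|\mathcal{T}_r(x)|\ll(\log x)^{r+2}\,x^{-1/2}\log|d_K|$ under GRH. I would substitute the explicit formula $R(t)=-\sum_\rho t^\rho/\rho+h_K(t)$, with $h_K(t)\ll\log|d_K|+n_K$ coming from the constant term and the trivial zeros; the $h_K$-part contributes $\ll\log|d_K|\,(\log x)^r/x$, which is lower order. For the zero part the extra $t^{-2}$ is decisive: integrating $\int_x^\infty t^{\rho-2}(\log t)^{r-1}(\log t-r)\,dt$ by parts $r+1$ times yields $\sum_{k=0}^{r}(\pm)\,(\log x)^{r-k}\sum_\rho\frac{x^{\rho-1}}{\rho(\rho-1)^{k+1}}$, and each inner sum is now \emph{absolutely} convergent. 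Under GRH one has $|\rho|=|\rho-1|=(1/4+\gamma_\rho^2)^{1/2}\ge 1/2$, so $\bigl|\sum_\rho\frac{x^{\rho-1}}{\rho(\rho-1)^{k+1}}\bigr|\le x^{-1/2}\sum_\rho\frac{2^{k}}{1/4+\gamma_\rho^{2}}\ll 2^{k}\,x^{-1/2}\log|d_K|$, using the standard estimate $\sum_\rho|\rho|^{-2}\ll\log|d_K|$ for the nontrivial zeros. Summing the geometric-type series in $k$ and handling the boundary term $\tfrac{(\log x)^r}{x}R(x)$ with the classical GRH bound $|R(x)|\ll\sqrt{x}\,(\log x)(\log|d_K|+n_K\log x)$ gives the claimed bound on $\mathcal{T}_r(x)$. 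The factor $2^{k}\le 2^{r}$ here — coming from $|1-\rho|\ge 1/2$ — is what ultimately produces the $2^{r+1}$ in the statement.

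It then remains to choose $x$. Since $|\mathcal{T}_r(x)|\ll(\log x)^{r+2}x^{-1/2}\log|d_K|$, the tail is negligible next to $(\log x)^{r+1}/(r+1)!$ once $x$ is a sufficiently large power of $\log|d_K|$; the clean choice $x=x_0^{2}$ with $\log x_0=\log r+\log_2|d_K|+2\log_3|d_K|$ (i.e. $x_0=r\log|d_K|(\log_2|d_K|)^2$) comfortably exceeds this threshold for $|d_K|$ large, and plugging $\log x=2\log x_0$ — so that $(\log x)^{r+1}/(r+1)!=2^{r+1}(\log x_0)^{r+1}/(r+1)!$ — into the inequality above gives $(-1)^{r+1}\gamma_{K,r}\ge-\tfrac{2^{r+1}}{r!(r+1)}(\log x_0)^{r+1}-(\text{error})$. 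One finally checks that the error, together with the loss incurred by using the convenient value $x_0$ in place of the exactly balanced one, is $\tfrac{2^{r+1}}{r!}(\log x_0)^{r+1}\cdot O\!\bigl(\tfrac{\log r+\log_3|d_K|}{\log_2|d_K|}\bigr)$, using $n_K\ll\log|d_K|$ (Minkowski) throughout; this yields \eqref{boundodd} and \eqref{boundeven}.

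The step I expect to be the main obstacle is the estimate of $\mathcal{T}_r(x)$. The explicit formula for $R$ involves the only conditionally convergent series $\sum_\rho t^\rho/\rho$, so one must either integrate by parts inside the $t$-integral (as sketched) to reach absolutely convergent zero-sums \emph{before} interchanging summation and integration, or work with a truncated explicit formula with a height $T=T(t)$ growing with $t$; in either case the clean bound for $\sum_\rho|\rho|^{-2}$ and the GRH bound for $R(x)$ are the needed inputs, and the $\log x$-degree polynomials arising from the repeated integration by parts must be tracked explicitly. A secondary, more mechanical, difficulty is the bookkeeping that converts ``$x_0$ is a large enough power of $\log|d_K|$'' into precisely the combination $\log r+\log_2|d_K|+2\log_3|d_K|$ inside the $(r+1)$-st power and the stated $O$-term: one must compare the size of $\mathcal{T}_r(x_0^2)$, the contribution of $h_K$ and of the trivial zeros, and the suboptimality of $x_0$, all against the main term, and verify that each is of relative size $O\!\bigl((\log r+\log_3|d_K|)/\log_2|d_K|\bigr)$.
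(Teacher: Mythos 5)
Your proposal follows essentially the same route as the paper's proof: both start from the finite-$x$ identity underlying Theorem~\ref{thm:EKlimformula1} (Proposition~\ref{mainlimprop}), exploit nonnegativity of $\Lambda_K$ to drop the prime sum (obtaining a lower bound for odd $r$ and an upper bound for even $r$), take $x=r^{2}(\log|d_K|)^{2}(\log_{2}|d_K|)^{4}$ so that $\log x=2(\log r+\log_{2}|d_K|+2\log_{3}|d_K|)$, and use Minkowski's $n_K\ll\log|d_K|$ to clean up. The one substantive difference is in estimating the tail: the paper simply quotes the GRH bound $\Delta_K(t)\ll\sqrt{t}\,(\log|d_K|+n_K\log t)\log t$ from Serre and inserts it directly into the boundary and integral terms, whereas you propose to re-derive an equivalent estimate from the explicit formula by repeated integration by parts so as to reach the absolutely convergent sums $\sum_\rho|\rho|^{-2}\ll\log|d_K|$ (and you correctly flag the conditional-convergence subtlety this is designed to sidestep). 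Your version is more self-contained but riskier to execute cleanly; the paper's is shorter; both yield the same order of error relative to the main term $(\log x)^{r+1}/(r+1)$. One remark worth correcting in your write-up: the factor $2^{r+1}$ in the theorem does \emph{not} come from $|1-\rho|\ge 1/2$ under GRH, as you first suggest; as you in fact observe later, it comes purely from the choice of $x$, namely $\log x=2\log x_0$ so that $(\log x)^{r+1}=2^{r+1}(\log x_0)^{r+1}$ — the $2^{k}$ factors from the zero-sum are absorbed into the $O$-term.
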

In Section \ref{sec:gen_uncond_bound}, Theorem \ref{uncond_Bound} we have also noted down general unconditional bounds. 
\begin{rem}
	As one may notice, these bounds are good for smaller values of $r$. As $r$ grows, our bound seems to be more reliant on $r$ than the number field.
\end{rem}

\begin{rem} To obtain the above formulae and bounds we didn't use the heavy machinery employed by Ihara. However, we were able to generalize Ihara's methods for higher coefficients as well and got the formula stated below. Unfortunately, these generalizations did not yield better bounds, so we have only included the arithmetic formula in this article. This formula below is important as unlike Theorem \ref{thm:EKlimformula1}, it does not require the explicit knowledge of the coefficients $\Lambda_K(n)$ of the Dirichlet series,  to do computations.
\end{rem}
\begin{tm}\label{thm:EKgenIhara}
For $x>1$ and $r \geq 0$, consider the function
$$\Phi_K(r, x) = \dfrac{1}{x-1} \sum \limits_{k, \; N(P)^k \leq \; x} \left( \dfrac{x}{N(P)^k} -1 \right) k^r (\log N(P))^{r+1}.$$
Let $f(r,x)$ be a recursively defined function as follows.  
\begin{align*}
	f(0,x) &=  \log x,\\
	f(1,x) &=   \left[ 2+  \frac{1+x}{1-x} \log x +\frac{1}{2}   (\log x )^2 \right], \;\; \text{ and for }r \geq 2  \\
	f(r,x) &= \frac{1}{r+1} (\log x)^{r+1} \; + \; \frac{1+x}{1-x}(\log x)^{r} + r(r-1)f(r-2,x).
\end{align*}
Then we have 
	\begin{equation}
	\gamma_{K,r} + (-1)^{r} = \frac{(-1)^{r+1}}{r!} \lim_{x \rightarrow \infty} \left( \frac{}{} \Phi_K(r,x) - f(r,x) \right).  
	\end{equation}	
\end{tm}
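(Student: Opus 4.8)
The plan is to reduce the statement to Theorem~\ref{thm:EKlimformula1} together with one classical analytic input. First I would unwind $\Phi_K$: the pairs $(P,k)$ with $N(P)^k\le x$ are precisely the prime powers $n=N(P)^k\le x$, and for such $n$ one has $\Lambda_K(n)=\log N(P)$ and $\log n=k\log N(P)$, so $k^r(\log N(P))^{r+1}=\Lambda_K(n)(\log n)^r$ and $x/N(P)^k=x/n$. Since $\Lambda_K$ vanishes off prime powers this gives, by pure algebra,
\[
\Phi_K(r,x)=\frac{1}{x-1}\sum_{n\le x}\Lambda_K(n)(\log n)^r\Big(\frac{x}{n}-1\Big)=\frac{x}{x-1}\,S_r(x)-\frac{1}{x-1}\,\psi_{K,r}(x),
\]
where $S_r(x):=\sum_{n\le x}\Lambda_K(n)(\log n)^r/n$ and $\psi_{K,r}(x):=\sum_{n\le x}\Lambda_K(n)(\log n)^r$. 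By Theorem~\ref{thm:EKlimformula1}, $S_r(x)=\frac{(\log x)^{r+1}}{r+1}+(-1)^{r+1}r!\,\gamma_{K,r}+o(1)$, and since $x/(x-1)=1+o(1)$ and $S_r(x)=O((\log x)^{r+1})$ the same asymptotic holds for $\tfrac{x}{x-1}S_r(x)$.

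Next I would control the extra piece $\psi_{K,r}(x)/(x-1)$. Put $I_r(x)=\int_1^x(\log t)^r\,dt$; integration by parts gives $I_r(x)=x(\log x)^r-rI_{r-1}(x)$ with $I_0(x)=x-1$, hence $I_r(x)=xP_r(\log x)+(-1)^{r+1}r!$ where the polynomials $P_r$ are defined by $P_0=1$ and $P_r(U)=U^r-rP_{r-1}(U)$. Partial summation applied to $\psi_{K,r}$, combined with the prime ideal theorem in the (unconditional) form $\psi_K(x)=x+O_K\!\big(x e^{-c_K\sqrt{\log x}}\big)$, yields $\psi_{K,r}(x)=I_r(x)+O_K\!\big(x(\log x)^r e^{-c_K\sqrt{\log x}}\big)=I_r(x)+o(x)$, so that $\psi_{K,r}(x)/(x-1)=P_r(\log x)+o(1)$. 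Substituting both computations into the display above,
\[
\Phi_K(r,x)=\frac{(\log x)^{r+1}}{r+1}-P_r(\log x)+(-1)^{r+1}r!\,\gamma_{K,r}+o(1).
\]

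It then remains to identify $f$. I would prove by induction on $r$ that $f(r,x)=\frac{(\log x)^{r+1}}{r+1}-P_r(\log x)+r!+o(1)$. The cases $r=0,1$ are direct (using $P_0=1$, $P_1(U)=U-1$, and $\frac{1+x}{1-x}=-1-\frac{2}{x-1}$, the latter making $\frac{1+x}{1-x}(\log x)^r=-(\log x)^r+o(1)$). For $r\ge2$ one compares the recursion $f(r,x)=\frac{1}{r+1}(\log x)^{r+1}+\frac{1+x}{1-x}(\log x)^r+r(r-1)f(r-2,x)$ with the polynomial identity $P_r(U)=U^r-rU^{r-1}+r(r-1)P_{r-2}(U)$ (immediate from iterating $P_r=U^r-rP_{r-1}$) together with the bookkeeping $r(r-1)\cdot(r-2)!=r!$. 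Granting this, $\Phi_K(r,x)-f(r,x)\to(-1)^{r+1}r!\,\gamma_{K,r}-r!$, and since $(-1)^{r+1}r!\,\gamma_{K,r}-r!=(-1)^{r+1}r!\big(\gamma_{K,r}+(-1)^r\big)$, multiplying by $(-1)^{r+1}/r!$ gives exactly $\gamma_{K,r}+(-1)^r=\frac{(-1)^{r+1}}{r!}\lim_{x\to\infty}\big(\Phi_K(r,x)-f(r,x)\big)$.

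None of these steps is deep; the only real ingredient beyond Theorem~\ref{thm:EKlimformula1} is the prime ideal theorem with a classical error term, needed precisely to guarantee $\psi_{K,r}(x)-I_r(x)=o(x)$ so that dividing by $x-1$ kills it. The part I expect to be fiddliest is the induction matching the recursively defined $f(r,x)$ to $\frac{(\log x)^{r+1}}{r+1}-P_r(\log x)+r!$, in particular squaring the two given base cases and the $\frac{1+x}{1-x}$ factors with that closed form. One could instead mimic Ihara directly, writing $\sum_{n\le x}\Lambda_K(n)(\log n)^r(x/n-1)$ as a Perron integral $\frac{(-1)^{r+1}}{2\pi i}\int_{(c)}(\zeta_K'/\zeta_K)^{(r)}(s+1)\frac{x^{s+1}}{s(s+1)}\,ds$ and pushing the contour left past $s=0$, whose residue reproduces $\frac{(\log x)^{r+1}}{r+1}$, the polynomial $P_r(\log x)$, the constant and the term $(-1)^{r+1}r!\,\gamma_{K,r}$; but this route is longer and I would not take it.
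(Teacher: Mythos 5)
Your proof is correct, but it follows a genuinely different path from the paper's. You reduce Theorem~\ref{thm:EKgenIhara} to Theorem~\ref{thm:EKlimformula1} plus the unconditional prime ideal theorem via elementary real-analytic bookkeeping, whereas the paper proves Theorem~\ref{thm:EKgenIhara} \emph{independently} by Ihara's contour-integration method: it writes $(x-1)(-1)^r\Phi_K(r,x)$ as a Perron integral of $Z_K^{(r)}(s)$, substitutes the Stark/Hadamard partial-fraction expansion of $\zeta_K'/\zeta_K$, and evaluates each piece ($\mathfrak{F}$ from the poles at $0$ and $1$, $\mathfrak{Z}$ from the non-trivial zeros, $\mathfrak{G}$ from the $\Gamma$-factors) by residue calculus. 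Your route is shorter and more elementary, but note the paper's remark just before the statement: the author emphasizes that Theorem~\ref{thm:EKgenIhara} is meant as a formula that does \emph{not} go through the Dirichlet coefficients $\Lambda_K(n)$, so the paper deliberately avoids invoking Theorem~\ref{thm:EKlimformula1}. Your deduction forfeits that independence; the paper's heavier route instead exposes the structural sources of the constant (contributions from zeros and from the archimedean factors), which is exactly what the author then exploits for the unconditional bounds of Section~\ref{sec:gen_uncond_bound}.

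Two small points to tighten. First, the sentence ``for such $n$ one has $\Lambda_K(n)=\log N(P)$'' is imprecise: several pairs $(P,k)$ can have $N(P)^k=n$, and then $\Lambda_K(n)=\sum_{(P,k):N(P)^k=n}\log N(P)$. Your subsequent display $\Phi_K(r,x)=\frac{1}{x-1}\sum_{n\le x}\Lambda_K(n)(\log n)^r(\tfrac{x}{n}-1)$ is nevertheless correct, because for each contributing pair $(\log n)^r\log N(P)=k^r(\log N(P))^{r+1}$; you should justify it termwise in this slightly more careful way. Second, the step $\psi_{K,r}(x)=I_r(x)+o(x)$ genuinely needs more than the bare prime ideal theorem $\psi_K(x)\sim x$: after partial summation you must control $\Delta_K(x)(\log x)^r$ and $\int_1^x\Delta_K(t)(\log t)^{r-1}t^{-1}\,dt$, which requires a power-of-log saving such as $\Delta_K(x)=O_K\bigl(x\exp(-c_K\sqrt{\log x})\bigr)$ (or at least $\Delta_K(x)=O(x(\log x)^{-A})$ with $A>r$). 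You do cite the correct form, but it is worth saying explicitly that the weaker $o(x)$ error term would not suffice here. With those clarifications the argument is complete, and the induction matching $f(r,x)$ to $\frac{(\log x)^{r+1}}{r+1}-P_r(\log x)+r!+o(1)$ (using $\frac{1+x}{1-x}=-1-\frac{2}{x-1}$ and $P_r(U)=U^r-rU^{r-1}+r(r-1)P_{r-2}(U)$) checks out.
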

\begin{rem}
	The $r=0$ case was proven by Ihara in \cite[Corollary 1, (1.6.1)]{ihara1}. In our proof in Section \ref{sec:Ihara}, we will focus on $r \geq 1$.
\end{rem}
\section{Auxiliary results}
We will now look at a few general results pertaining to a function defined by a  Dirichlet series and its Laurent series expansion.  The next Proposition can be considered as a generalization of \cite[Proposition 2.1]{anupram}.  These results can be deduced from the short paper \cite{briggs} of Briggs and Buschman. \\

Let $L(s)$ be a Dirichlet series such that for some $C>0$, we can write the following Laurent
series at $s=1$ 
$$ - \frac{L'(s)}{L(s)} = \sum_{n=1}^{\infty} \frac{b_n}{n^s} = \frac{C}{s-1} + C_0 + \sum_{r=1}^{\infty} \frac{(-1)^r C_r}{r!} (s-1)^r .  $$
Let $B(x) := \sum \limits_{n \leq x} b_n$, and assume that $E(x) := B(x) - Cx = O(x^b)$ where $0 \leq b<1$. By partial summation we have 
$$ - \frac{L'(s)}{L(s)}  = s \int_{1}^{\infty} \frac{B(t)}{t^{s+1}} dt = s \int_{1}^{\infty}  \frac{Ct + E(t)}{t^{s+1}}dt = \frac{Cs}{s-1}+s  \int_{1}^{\infty}  \frac{E(t)}{t^{s+1}}dt. $$
Note that the integral is analytic for Re$(s) > b$. Let us define 
$$L_1(s) := s  \int_{1}^{\infty}  \frac{E(t)}{t^{s+1}}dt = - \frac{L'(s)}{L(s)} -  \frac{Cs}{s-1},  $$ 
and so,
\begin{equation}\label{defL_1}
	L_1(s) = -C + C_0 + \sum_{r=1}^{\infty} \frac{(-1)^r C_r}{r!} (s-1)^r . 
\end{equation} 
\begin{pr}\label{mainprop}
	If $u < -b$ and $L(s), \; L_1(s)$ as above, then for $x \geq 1$ we have 
	\begin{align*}
		\sum_{n \leq x} n^u b_n \log^r(n) =(-1)^rL_1^{(r)}(-u) &+ C \int_{1}^{x}t^u \log^r t \; dt \; + D_r + E(x)x^u \log^r(x) \\ & + \int_{x}^{\infty} (u \log t+r )  E(t)  t^{u-1} \log^{r-1} t \; dt 
	\end{align*}
	Where $D_0 = C$ and $D_r = 0$ for $r\geq 1$. 
\end{pr}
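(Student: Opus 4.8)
The plan is to prove the identity directly, by Abel (partial) summation on the \emph{finite} sum $\sum_{n\le x}n^u b_n\log^r n$, feeding in the hypothesis $B(x)=Cx+E(x)$, and then to recognize the resulting convergent tail integral as a derivative of the analytic function $L_1$. Throughout, set $f(t)=t^u\log^r t$, so that $f'(t)=(u\log t+r)\,t^{u-1}\log^{r-1}t$ (for $r=0$ this is just $f'(t)=ut^{u-1}$, and one may treat that case separately if preferred).

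First I would write, by Abel summation on $[1,x]$,
\[
\sum_{n\le x}b_n f(n)=B(x)f(x)-\int_1^x B(t)f'(t)\,dt,
\]
and substitute $B(t)=Ct+E(t)$. This splits the right side into a ``main'' part $Cxf(x)-C\int_1^x tf'(t)\,dt$ and an ``error'' part $E(x)f(x)-\int_1^x E(t)f'(t)\,dt$. An integration by parts, $\int_1^x tf'(t)\,dt=xf(x)-f(1)-\int_1^x f(t)\,dt$, collapses the main part to $Cf(1)+C\int_1^x t^u\log^r t\,dt$; since $f(1)=1^u\cdot 0^r$ equals $1$ when $r=0$ and $0$ when $r\ge1$, the constant $Cf(1)$ is exactly $D_r$. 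The term $E(x)f(x)$ is already $E(x)x^u\log^r x$, so at this stage
\[
\sum_{n\le x}n^u b_n\log^r n = D_r + C\int_1^x t^u\log^r t\,dt + E(x)x^u\log^r x - \int_1^x E(t)f'(t)\,dt.
\]

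Next I would dispose of the last integral. Because $E(t)=O(t^b)$ and $u<-b$ we have $b+u-1<-1$, so $E(t)f'(t)=O\left(t^{b+u-1}\log^r t\right)$ is absolutely integrable on $[1,\infty)$; hence $-\int_1^x E(t)f'(t)\,dt=-\int_1^\infty E(t)f'(t)\,dt+\int_x^\infty E(t)f'(t)\,dt$, and the second term is precisely the tail integral $\int_x^\infty(u\log t+r)E(t)t^{u-1}\log^{r-1}t\,dt$ appearing in the statement. It remains to identify $-\int_1^\infty E(t)f'(t)\,dt$ with $(-1)^rL_1^{(r)}(-u)$. For this I would differentiate the representation $L_1(s)=s\,g(s)$, $g(s)=\int_1^\infty E(t)t^{-s-1}\,dt$, under the integral sign: since $g^{(k)}(s)=(-1)^k\int_1^\infty E(t)t^{-s-1}\log^k t\,dt$, Leibniz gives $L_1^{(r)}(s)=s\,g^{(r)}(s)+r\,g^{(r-1)}(s)=(-1)^r\int_1^\infty E(t)t^{-s-1}\log^{r-1}t\,(s\log t-r)\,dt$ for $r\ge1$ (and $L_1(-u)=-u\,g(-u)$ settles $r=0$). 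Evaluating at $s=-u$ and comparing with $f'(t)$ yields $(-1)^rL_1^{(r)}(-u)=-\int_1^\infty E(t)f'(t)\,dt$, and assembling the pieces gives the claim.

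The only genuinely delicate point is the justification of differentiating $L_1$ under the integral sign at $s=-u$: this is precisely why the hypothesis $u<-b$ is imposed, since it places $s=-u$ in the open half-plane $\mathrm{Re}(s)>b$ where $L_1$ is analytic and equal to that integral, and where the differentiated integrals $\int_1^\infty E(t)t^{-s-1}(\log t)^k\,dt$ converge locally uniformly (by the same $O(t^b)$ bound). Everything else is bookkeeping — in particular checking that the $r=0$ case, where the Leibniz expansion has a single term and $D_0=C$, is consistent with $r\ge1$. Note that one never needs the Dirichlet series $\sum b_n n^{-s}$ itself to converge at $s=-u$; passing through $L_1$ is exactly what lets the formula reach the range $u<-b$.
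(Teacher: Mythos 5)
Your proof is correct and follows essentially the same route as the paper: Abel summation, the decomposition $B(t)=Ct+E(t)$, integration by parts for the main term, and identification of the full error integral with $(-1)^rL_1^{(r)}(-u)$. The only organizational difference is that the paper packages the Leibniz computation via the identity $\frac{d}{dt}(t^u\log^r t)=\frac{d^r}{du^r}(u\,t^{u-1})$ and applies $\frac{d^r}{du^r}$ to the recognized $L_1(-u)$ at the end, whereas you carry out the Leibniz expansion of $L_1^{(r)}(s)=s\,g^{(r)}(s)+r\,g^{(r-1)}(s)$ explicitly and match it against $-\int_1^\infty E(t)f'(t)\,dt$; these are the same calculation read in opposite directions.
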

\begin{proof}
	From Abel's summation formula we have 
	\begin{equation}\label{Abel1}
		\sum_{n \leq x} n^u b_n \log^r(n) = B(x)x^{u}\log^r(x) - \int_{1}^{x} B(t) \frac{d}{dt} (t^u \log^rt)dt.
	\end{equation}
	We note the following interesting relation coming from elementary calculus using the Leibniz product rule for higher derivatives.
	\begin{align*}
		\frac{d^r}{du^r} (ut^{u-1}) = \sum_{k=0}^r \binom{r}{k} \; \frac{d^{k}}{du^k} u \; \frac{d^{r-k}}{du^{r-k}} t^{u-1} 
		=ut^{u-1}\log^r t + rt^{u-1}\log^{r-1} t = \frac{d}{dt} (t^u \log^rt)
	\end{align*} 
	Hence, substituting this in \eqref{Abel1} 
	\begin{align}\label{Abel3}
		\nonumber	\sum_{n \leq x} n^u b_n \log^r(n) &= B(x)x^{u}\log^r(x) - \int_{1}^{x} B(t) \frac{d^r}{du^r} (ut^{u-1})dt \\
		&= Cx^{u+1}\log^r x + E(x)x^u\log^r x - \frac{d^r}{du^r} \left[ \int_{1}^{x}  (Ct+E(t))ut^{u-1} dt \right]. 
	\end{align}
	Focusing on the last integral 
	\begin{align*}
	- \frac{d^r}{du^r} \left[ \int_{1}^{x}  (Ct+E(t))ut^{u-1} dt \right] 
		&= \frac{d^r}{du^r} \left[ -Cu \int_{1}^{x} t^u dt - u \int_{1}^{\infty} t^{u-1}E(t) dt + u \int_{x}^{\infty}t^{u-1}E(t)dt  \right] \\
		&= \frac{d^r}{du^r} \left[ -Cu \int_{1}^{x} t^u dt +L_1(-u) + u \int_{x}^{\infty}t^{u-1}E(t)dt  \right]\\
		&= (-1)^rL_{1}^{(r)}(-u) + \frac{d^r}{du^r} \left[ -Cu \int_{1}^{x} t^u dt + u \int_{x}^{\infty}t^{u-1}E(t)dt  \right]
	\end{align*}
	We now combine the first term in the brackets above with the first term of \eqref{Abel3}.
	\begin{align}
		\nonumber Cx^{u+1}\log^r x   - C \frac{d^r}{du^r} u \int_{1}^{x} t^u dt \
		&= C \frac{d^r}{du^r} \left[ x^{u+1} - 1 - u\int_{1}^{x} t^u dt \right] + \frac{d^r}{du^r} C \\ \nonumber
		&= C \frac{d^r}{du^r} \left[  \int_{1}^x t^u dt \right] +D_r \\
		&= C\int_{1}^{x} t^u \log^r t \; dt \; + D_r.
	\end{align}
	Recall, as defined in the statement, $D_0 = C$ and $D_r = 0$ for $r \geq 1$. Lastly,
	\begin{align}
		\nonumber \frac{d^r}{du^r} \left[ u \int_{x}^{\infty}t^{u-1}E(t)dt \right] &=  \int_{x}^{\infty} (u \log t+r )  E(t)  t^{u-1} \log^{r-1} t \; dt 
	\end{align}
	gives us our desired result.
\end{proof}
\begin{pr}\label{mainlimprop}
	For $x \geq 1$ and $r \geq 0$ we have 
	\begin{equation} 
		C_r =  \sum_{n \leq x} \frac{b_n \log^r n}{n} - \frac{C}{r+1} \log^{r+1} x  - \frac{E(x)\log^r x}{x}  - \int_{x}^{\infty}  \frac{(\log^r t + r \log^{r-1} t) E(t)}{t^2} dt 
	\end{equation}
	and so, letting $x \rightarrow \infty$ we get the formula
	\begin{equation}
		C_r = \lim_{x \rightarrow \infty} \left(  \sum_{n \leq x} \frac{b_n \log^r n}{n} - \frac{C}{r+1} \log^{r+1} x \right).
	\end{equation}
\end{pr}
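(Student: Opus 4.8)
The plan is to specialize Proposition~\ref{mainprop} to the single exponent $u=-1$ and then extract $C_r$ from the Taylor coefficients of $L_1$ recorded in \eqref{defL_1}. First I would check that the hypothesis $u<-b$ of Proposition~\ref{mainprop} holds at $u=-1$: this is immediate since we assumed $0\le b<1$, so $-1<-b$. Setting $u=-1$, the summand $n^u b_n\log^r n$ becomes $\frac{b_n\log^r n}{n}$; the term $C\int_1^x t^u\log^r t\,dt$ becomes $C\int_1^x\frac{\log^r t}{t}\,dt=\frac{C}{r+1}(\log x)^{r+1}$ after the substitution $w=\log t$; the boundary term is $\frac{E(x)\log^r x}{x}$; and the last term becomes an integral of $E(t)/t^2$ against an explicit polynomial of degree $r$ in $\log t$.

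Next I would identify $(-1)^r L_1^{(r)}(-u)=(-1)^r L_1^{(r)}(1)$. By \eqref{defL_1}, the Taylor expansion of $L_1$ at $s=1$ is $L_1(s)=(C_0-C)+\sum_{r\ge 1}\frac{(-1)^r C_r}{r!}(s-1)^r$. Hence for $r\ge 1$ we have $L_1^{(r)}(1)=(-1)^r C_r$, so $(-1)^r L_1^{(r)}(1)=C_r$, while for $r=0$ we have $L_1(1)=C_0-C$. Combining with the constants from Proposition~\ref{mainprop} ($D_0=C$ and $D_r=0$ for $r\ge 1$), in every case $r\ge 0$ the quantity $(-1)^r L_1^{(r)}(1)+D_r$ equals $C_r$: for $r\ge 1$ this is clear, and for $r=0$ the term $D_0=C$ is exactly what cancels the $-C$ in $L_1(1)$. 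Rearranging the identity of Proposition~\ref{mainprop} to solve for this combination then produces the first displayed formula of the Proposition, valid for every fixed $x\ge 1$.

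Finally, to get the limit formula I would let $x\to\infty$ and show the last two terms vanish. Since $E(t)=O(t^b)$, the boundary term is $\frac{E(x)\log^r x}{x}=O\!\left(x^{b-1}(\log x)^r\right)\to 0$ because $b-1<0$. For the remaining integral, the integrand is $O\!\left(t^{b-2}(\log t)^r\right)$, and $\int_x^\infty t^{b-2}(\log t)^r\,dt$ converges since $b-2<-1$ and tends to $0$ as $x\to\infty$; hence that integral tends to $0$ as well, giving $C_r=\lim_{x\to\infty}\bigl(\sum_{n\le x}\frac{b_n\log^r n}{n}-\frac{C}{r+1}(\log x)^{r+1}\bigr)$.

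I do not expect a serious obstacle here; the argument is essentially careful bookkeeping on top of Proposition~\ref{mainprop}. The step that most deserves attention is the normalization — tracking the factor $\frac{(-1)^r C_r}{r!}$ through $(-1)^r L_1^{(r)}(-u)$ at $-u=1$ — together with the uniform treatment of the $r=0$ case, where the constant $D_0=C$ is precisely what is needed to absorb the $-C$ in $L_1(1)$ so that a single formula covers all $r\ge 0$.
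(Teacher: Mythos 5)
Your route is exactly the paper's: specialize Proposition~\ref{mainprop} at $u=-1$, use \eqref{defL_1} to identify $(-1)^rL_1^{(r)}(1)+D_r=C_r$ (with $D_0=C$ absorbing the $-C$ at $r=0$), and let $x\to\infty$ using $E(t)=O(t^b)$ with $b<1$ to kill the boundary and tail terms. One caveat: you leave the tail integral as ``an explicit polynomial of degree $r$ in $\log t$'' rather than writing it out; doing so, the substitution $u=-1$ in $\int_x^\infty(u\log t+r)E(t)t^{u-1}\log^{r-1}t\,dt$ gives $\int_x^\infty(-\log t+r)E(t)t^{-2}\log^{r-1}t\,dt$, so the fixed-$x$ identity you actually obtain is
\[
C_r=\sum_{n\le x}\frac{b_n\log^r n}{n}-\frac{C}{r+1}\log^{r+1}x-\frac{E(x)\log^r x}{x}+\int_x^\infty\frac{(\log^r t-r\log^{r-1}t)E(t)}{t^2}\,dt,
\]
which differs in sign from the Proposition's first displayed formula (and, differently again, from the sign in the paper's own proof) --- a typo in the paper that your phrase ``produces the first displayed formula'' papers over by not writing the polynomial explicitly. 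The limit formula, which is all that is used downstream, is unaffected since the tail integral still tends to zero.
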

\begin{proof}
	Note that, from \eqref{Abel1} we get $L_1(1) = -C+C_0$, whereas for $r \geq 1$, we have $L_1^{(r)}(1) = (-1)^r C_r$. Substituting this and putting $u=-1$ in Proposition \ref{mainprop} we get, for $r=0$ 
	\begin{align*}
		& \sum_{n \leq x} \frac{b_n}{n} = -C+C_0 + C \int_{1}^x \frac{dt}{t} + C  + \frac{E(x)}{x} - \int_{x}^{\infty} \frac{E(t)}{t^2} dt \\
		\text{and so,}\;\;\;  & C_0 =  \sum_{n \leq x} \frac{b_n}{n} - C \log x + \int_{x}^{\infty} \frac{E(t)}{t^2} dt + O(x^{b-1}).
	\end{align*}
	Similarly, for $r \geq 1$, we have
	\begin{align*}
		& \sum_{n \leq x} \frac{b_n \log^r n}{n} = C_r + C \int_{1}^x \frac{\log^r t}{t} dt  - \int_{x}^{\infty}  \frac{(\log^r t + r \log^{r-1} t) E(t)}{t^2} dt + O(x^{b-1}\log^r x)\\
		& C_r =  \sum_{n \leq x} \frac{b_n \log^r n}{n} - \frac{C}{r+1} \log^{r+1} x + \int_{x}^{\infty}  \frac{(\log^r t + r \log^{r-1} t) E(t)}{t^2} dt + O(x^{b-1}\log^r x).
	\end{align*}
	Letting $x \rightarrow \infty$ we get the other formula.
\end{proof}

\section{Proof of  Theorem \ref{thm:EKlimformula1} and Theorem \ref{thm:EKbound}}
We will now apply these two propositions to derive the arithmetic formula and bounds satisfied by $\gamma_{K,r}$. \vspace{2mm}\\
\textbf{\emph{Proof of Theorem \ref{thm:EKlimformula1}}}. Follows directly from Proposition \ref{mainlimprop}, by noting that for the Dedekind zeta function, $L(s) = \zeta_K(s)$, we have $C=1$, $b_n = \Lambda_K(n)$ and $\gamma_{K,r} = \frac{(-1)^{r+1}}{r!}C_r$. Note that there is an extra negative sign, coming from the fact that $\gamma_{K,r}$ are coefficients of the Laurent series of $ \frac{\zeta_{K}'(s)}{\zeta_K(s)}$, whereas, $\Lambda_K(n)$ are coefficients of the Dirichlet series of $-\frac{\zeta_{K}'(s)}{\zeta_K(s)}$.  \qed

\begin{co}
	For $r \geq 1$ we have
	$$ \gamma_{K,r} = \frac{(-1)^{r}}{r!} \int_{1}^{\infty} \frac{(\log^r(t) + r \log^{r-1}t) \Delta_K(t)}{t^2} dt \;, $$
	where $\Delta_K(x) = \sum_{n \leq x} \Lambda_K(n) - x$ and $\Lambda_K(n)$ as in Theorem \ref{thm:EKlimformula1} .    
\end{co}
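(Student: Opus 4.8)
The plan is to derive this corollary directly from Proposition~\ref{mainlimprop} (or equivalently from the error-term formula in Proposition~\ref{mainprop} with $u=-1$), specialized to $L(s)=\zeta_K(s)$, where $C=1$, $b_n=\Lambda_K(n)$, and the partial-sum error is exactly $E(x)=\sum_{n\le x}\Lambda_K(n)-x=\Delta_K(x)$. The prime ideal theorem (together with a classical zero-free region, or GRH if one wants a stronger exponent) guarantees $\Delta_K(x)=O(x^b)$ for some $b<1$, so the hypotheses of Propositions~\ref{mainprop} and~\ref{mainlimprop} are satisfied; this is the only analytic input needed.

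First I would recall the $x$-dependent identity from Proposition~\ref{mainlimprop}: for $r\ge 1$,
\begin{equation*}
	C_r = \sum_{n\le x}\frac{b_n\log^r n}{n} - \frac{C}{r+1}\log^{r+1}x - \frac{E(x)\log^r x}{x} - \int_x^\infty \frac{(\log^r t + r\log^{r-1}t)E(t)}{t^2}\,dt.
\end{equation*}
Next I would take the limit $x\to\infty$ in a slightly different way than in Proposition~\ref{mainlimprop}: instead of absorbing $\sum_{n\le x} b_n\log^r n/n - \frac{C}{r+1}\log^{r+1}x$ into $C_r$, I would set $x=1$. At $x=1$ the sum $\sum_{n\le 1}$ is empty (since $\Lambda_K(1)=0$), the term $\frac{C}{r+1}\log^{r+1}1=0$, and the boundary term $\frac{E(1)\log^r 1}{x}=0$ because $\log^r 1 = 0$ for $r\ge 1$. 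Hence the identity collapses to
\begin{equation*}
	C_r = -\int_1^\infty \frac{(\log^r t + r\log^{r-1}t)E(t)}{t^2}\,dt.
\end{equation*}
Finally, using $\gamma_{K,r} = \frac{(-1)^{r+1}}{r!}C_r$ (as established in the proof of Theorem~\ref{thm:EKlimformula1}) and $E(t)=\Delta_K(t)$, the sign $(-1)^{r+1}\cdot(-1)=(-1)^r$ produces exactly the claimed formula
\begin{equation*}
	\gamma_{K,r} = \frac{(-1)^r}{r!}\int_1^\infty \frac{(\log^r t + r\log^{r-1}t)\Delta_K(t)}{t^2}\,dt.
\end{equation*}

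I expect no serious obstacle here; the only point needing care is justifying that the improper integral converges and that the $x\to\infty$ tail terms in Proposition~\ref{mainlimprop} vanish, which follows from the bound $\Delta_K(t)=O(t^b)$ with $b<1$ since then $\int_x^\infty (\log^r t)\,t^{b-2}\,dt = O(x^{b-1}\log^r x)\to 0$ and $E(x)\log^r x/x = O(x^{b-1}\log^r x)\to 0$. Alternatively one can avoid Proposition~\ref{mainlimprop} entirely and read the result off Proposition~\ref{mainprop} by plugging in $u=-1$, $x=1$, noting $L_1^{(r)}(1)=(-1)^rC_r$, $D_r=0$ for $r\ge 1$, and that the $\int_1^1$ and boundary terms all vanish. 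I would present the $x=1$ substitution route as the cleanest, remarking that it is really just the $r\ge1$ case of Proposition~\ref{mainprop} evaluated at the left endpoint.
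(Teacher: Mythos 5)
Your proof is correct and takes exactly the same route as the paper: the paper's entire proof of this corollary is the single sentence ``This is obtained by simply taking $x=1$ in Proposition~\ref{mainlimprop},'' and your more detailed account (the empty sum, the vanishing $\log^{r+1}1$ and $E(1)\log^r 1$ terms, then applying $\gamma_{K,r}=\frac{(-1)^{r+1}}{r!}C_r$) is precisely the justification the paper leaves implicit.
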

\begin{proof}
	This is obtained by simply taking $x=1$ in Proposition \ref{mainlimprop}.
\end{proof}
\begin{rem}
	This can be seen as the generalization of the following integral formula (see \cite[Theorem 1.1]{anupram}) that relates the Euler-Kronecker constant $\gamma_{K}$ (in our notation $\gamma_{K,0}$) to the error term in the prime ideal theorem.
	$$\gamma_{K,0} = 1 - \int_{1}^{\infty} \frac{\Delta_K(t)}{t^2} dt.$$

\end{rem}

We will now give a proof for the bounds. \vspace{2mm}\\ 
\textbf{\emph{Proof of Theorem \ref{thm:EKbound}.}} Adapting Proposition \ref{mainlimprop} for $\frac{\zeta_K'(s)}{\zeta_K(s)}$ we have 
\begin{align*}
(-1)^{r+1} r! \cdot 	\gamma_{K,r}  =   \sum_{n \leq x} \frac{\Lambda_K(n) \log^r n}{n} & - \frac{1}{r+1} \log^{r+1} x - \frac{\Delta_K(x) \log^r x}{x}\\ 
	& - \int_{x}^{\infty}  \frac{(\log^r t + r \log^{r-1} t) \Delta_K(t)}{t^2} dt \\
\end{align*}
Assuming GRH, we know that (see \cite[Theorem 4]{serreprime})
\begin{equation}
	\Delta_K(x) \ll \sqrt{x} \left(  \log |d_K| + n_K \log x \right) \log x.
\end{equation}
(Recall that we are writing $n_K = [K : \mathbb{Q}]$.)\\

Thus when $r\geq 1$ is odd, from positivity  of $\Lambda_K(n)$'s and using the above bound for $\Delta_K(n)$ we have
\begin{align*}
	r! \cdot \gamma_{K,r} & \geq - \frac{1}{r+1} \log^{r+1} x  + O \left( \frac{r( \log |d_K| + n_K \log x) \log^{r+1} x}{\sqrt{x}} \right) \\
	& = \log^{r+1} x  \left( \frac{-1}{r+1} + O \left( \frac{r( \log |d_K| + n_K \log x) }{\sqrt{x}} \right)  \right)
\end{align*}
Thus choosing $x = r^2( \log |d_K|)^2 ( \log_2 |d_K|)^4$  and using the fact that $\frac{n_K}{\log |d_K|}$ is bounded, due to Minkowski, we get  
\begin{align*}
	\gamma_{K,r} &\geq  \frac{2^{r+1}}{r!} \left( \log r +  \log_2 |d_K| + 2  \log_3 |d_K| \right)^{r+1} \left( \frac{-1}{r+1} + O \left( \frac{ \log r +  \log_3 |d_K| }{ \log_2 |d_K|} \right) \right)
\end{align*}
On the other hand, for $r \geq 2$ even, we follow the same process while being mindful of the negative signs. \qed
\section{General unconditional bounds}\label{sec:gen_uncond_bound} 
Recall that we're writing the Laurent series of $\frac{\z_K'(s)}{\z_K(s)}$ about $s=1$ as
\begin{align*}
	\frac{\z_K'(s)}{\z_K(s)} + \frac{1}{s-1} = \gamma_{K,0} + \sum_{r=1}^{\infty} \gamma_{K,r} \; (s-1)^r.
\end{align*}
Differentiating both sides $r$ times and letting $s \rightarrow 1$ we get, 
\begin{equation}\label{gamma_k,r}
	\lim_{s \rightarrow 1^+} \;  \dfrac{d^r}{ds^r} \left[ \dfrac{\z_K'(s)}{\z_K(s)} +  \dfrac{1}{(s-1)}\right] = r! \cdot \gamma_{K,r}
\end{equation}
On the other hand, from the Hadamard factorization, we have (e.g. see \cite[Lemma 3]{stark1}) 
\begin{equation}\label{stark1}
	-  \dfrac{\z_K'(s)}{\z_K(s)}  = \dfrac{1}{s} + \dfrac{1}{s-1} - \sum \dfrac{1}{s - \rho}+ \frac{1}{2} \log |d_K| + \beta_K + \tilde{ \Gamma}_K (s)
\end{equation}
where the sum, $\rho$ runs over all non-trivial zeros of $\z_K(s)$, counted with multiplicities. Here 
\begin{align*}
	\beta_K &= -\left\{\frac{r_1}{2}(\gamma + \log 4 \pi) + r_2 (\gamma + \log 2 \pi)\right\},  \text{ and \hspace{2cm}}\; \\
	\tilde{ \Gamma}_K (s) &= \frac{r_1}{2} \left(\frac{\Gamma'}{\Gamma} \left(\frac{s}{2}\right) - \frac{\Gamma'}{\Gamma} \left(\frac{1}{2}\right)\right) + r_2 \left(\frac{\Gamma'}{\Gamma}(s) - \frac{\Gamma'}{\Gamma}(1)\right). \;\; 
\end{align*}  
with $r_1$, $r_2$ respectively being the number of real, imaginary places of $K$. Thus rearranging the terms in \eqref{stark1} and taking $\lim s \rightarrow 1$, we get
\begin{align} \label{alphak}
	\gamma_{K,0} + 1 &= \sum_{\rho} \dfrac{1}{1- \rho} - \frac{1}{2} \log |d_K| - \beta_K.  
\end{align}
On the other hand, differentiating \eqref{stark1} $r$ times  we get, 
\begin{equation}\label{stark1der}
	\dfrac{d^r}{ds^r} \left[ \dfrac{\z_K'(s)}{\z_K(s)} +  \dfrac{1}{(s-1)}\right] =  \frac{(-1)^{r+1} r!}{s^{r+1}} + \sum \dfrac{(-1)^r r!}{(s - \rho)^{r+1}} - \tilde{ \Gamma}^{(r)}_K(s). 
\end{equation}
Letting $s \rightarrow1$ and using \eqref{gamma_k,r} we derive 
\begin{equation}\label{gamma_kr}
	\gamma_{K,r} = (-1)^{r+1} + \sum \dfrac{(-1)^r }{(1 - \rho)^{r+1}} -  \frac{1}{r!} \tilde{ \Gamma}^{(r)}_K(1). 
\end{equation}
For the $\tilde{ \Gamma}_K$ term, we look at the series expansion of the \emph{digamma} function $\psi(s) = \frac{\Gamma'}{\Gamma}(s)$ and its derivatives. 
\begin{align*}\label{gammaformula1}
		\psi(s) = \frac{\Gamma'}{\Gamma}(s) = - \gamma - \sum_{k=0}^{\infty} \left( \frac{1}{s+k} - \frac{1}{1+k} \right)
	\Rightarrow \; \psi^{(r)}(s)  = (-1)^{r+1}  \sum_{k=0}^{\infty} \frac{r!}{(s+k)^{r+1}}.
\end{align*} 
Note that $\tilde{\Gamma}_K(s) = \frac{r_1}{2} \left[\psi \left(\frac{s}{2}\right) - \psi \left(\frac{1}{2}\right) \right] + r_2 \left[ \psi(s) - \psi (1) \right]$ and so
\begin{equation}\label{gammaformula2}
	\tilde{\Gamma}_K^{(r)}(s) = \frac{r_1}{2^{r+1}} \; \psi^{(r)} \left(\frac{s}{2}\right) + r_2  \; \psi^{(r)}(s).
\end{equation}
Therefore the $\frac{1}{r!} \tilde{ \Gamma}^{(r)}_K(1)$ term in \eqref{gamma_kr} is 
$ O(n_K)$, where $n_K = r_1 + 2r_2 = [K: \Q]$. We now focus on the sum over the non-trivial zeros. This is where the main contribution comes from, in fact, that too from the low hanging zeros. The sum can be split into two parts, those with $|\Im(\rho)| \leq 1$ and $|\Im(\rho)|>1$. \\

Writing $\rho = \sigma + it$ we note that, $|1-\rho| \geq |t|$ and so for $|t|>1$, (and for $r \geq 1$)
\begin{align*}
	 \sum_{|\Im(\rho)|>1} \left| \dfrac{(-1)^r }{(1 - \rho)^{r+1}} \right| & \leq \sum_{|t|>1} \frac{1}{t^2} \leq \sum_{j>1} \sum_{j < |t| < j+1} \frac{1}{j^2} \\
	 & \leq \sum_{j>1} \frac{c_1(n_K \log j + \log |d_K|)}{j^2} = O(\log |d_K|)
	 \end{align*}
This last inequality follows from the well known result below, on the number of zeroes of the Dedekind zeta function (see \cite[Theorem 5.31]{iwaniec2021analytic}).
\begin{lm}\label{numZero}
	Let $N_K(t)$ be the number of zeros of $\zeta_K(s)$ in the region $0 \leq \Re(s) \leq 1$ and $|\Im(s)| \leq t$, then 
	$$N_{K}(t+1) - N_K(t) \ll n_K \log t + \log |d_K|$$ 
	where $n_K = [K: \Q]$ and the implied constant here is absolute.
\end{lm}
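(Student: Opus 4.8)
The statement to prove is Lemma \ref{numZero}, the standard counting estimate $N_K(t+1) - N_K(t) \ll n_K \log t + \log|d_K|$. Let me sketch the proof.

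\textbf{Plan.} I would derive this from the Hadamard factorization / explicit-formula machinery already present in the excerpt, combined with a Jensen-type argument — exactly the approach used for $\zeta(s)$ in Davenport or Iwaniec--Kowalski. The object to exploit is equation \eqref{stark1}:
\[
-\frac{\zeta_K'}{\zeta_K}(s) = \frac{1}{s} + \frac{1}{s-1} - \sum_\rho \frac{1}{s-\rho} + \tfrac12\log|d_K| + \beta_K + \tilde\Gamma_K(s).
\]
The key point is that taking the real part of this identity at a point $s = 2 + it$ shows that $\sum_\rho \Re\frac{1}{2+it-\rho}$ is $O(n_K\log(|t|+2) + \log|d_K|)$, because the left-hand side is $O(n_K)$ there (the Dirichlet series converges absolutely), the elementary terms are negligible, and $\tilde\Gamma_K(2+it)$ is $O(n_K\log(|t|+2))$ by Stirling applied to $\psi$. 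Since every zero $\rho = \beta + i\gamma$ in the strip has $\Re\frac{1}{2+it-\rho} = \frac{2-\beta}{(2-\beta)^2 + (t-\gamma)^2} \ge \frac{1}{4 + (t-\gamma)^2} \gg 1$ whenever $|t - \gamma| \le 1$, each term in the sum is positive, so we may bound the count of zeros with $|\gamma - t| \le 1$ by a constant multiple of the whole sum. This immediately gives $N_K(t+1) - N_K(t) \ll n_K\log(|t|+2) + \log|d_K|$.

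\textbf{Steps in order.} First I would record the bound $\tilde\Gamma_K(2+it) \ll n_K \log(|t|+2)$ from Stirling's formula for $\Gamma'/\Gamma$ (equivalently from the series for $\psi^{(0)}$ combined with $\psi(w) = \log w + O(1/|w|)$ for $\Re w$ bounded below and away from $0$). Second, evaluate \eqref{stark1} at $s = 2+it$ and take real parts; the left side is $\le \sum_{\mathfrak a}\Lambda(\mathfrak a)/(N\mathfrak a)^2 = -\zeta_K'(2)/\zeta_K(2) = O(n_K)$ (termwise comparison with $n_K$ copies of $-\zeta'(2)/\zeta(2)$), the $1/s$ and $1/(s-1)$ and $\beta_K$ terms are $O(n_K)$, so
\[
\sum_\rho \Re\frac{1}{2+it-\rho} \ll n_K\log(|t|+2) + \log|d_K|.
\]
Third, observe all summands are positive (since $0 \le \beta \le 1$ forces $2 - \beta \ge 1 > 0$), and restrict to zeros with $|\gamma - t| \le 1$, each contributing $\gg 1$; this gives the count bound at height $t$ for the window $[t-1, t+1]$, hence in particular $N_K(t+1) - N_K(t)$. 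A harmless adjustment handles replacing $\log(|t|+2)$ by $\log t$ for $t$ large (and for small $t$ the statement is trivial or absorbed into the implied constant).

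\textbf{Main obstacle.} There is no deep obstacle; the only points requiring care are (i) making the dependence on $n_K$ and $\log|d_K|$ completely explicit and uniform, i.e.\ checking that every "$O(1)$" is really $O(n_K)$ with an absolute constant and that no hidden factor of $\log|d_K|$ or $n_K$ creeps in — in particular in the Stirling bound for $\tilde\Gamma_K$ and in $\beta_K = O(n_K)$; and (ii) the bound $-\zeta_K'(2)/\zeta_K(2) = \sum_{\mathfrak a}\Lambda(\mathfrak a)(N\mathfrak a)^{-2} \le n_K\sum_p\sum_k (\log p) p^{-2k} = -n_K\,\zeta'(2)/\zeta(2)$, which uses that each rational prime $p$ has at most $n_K$ primes of $\mathcal O_K$ above it and each such $\mathfrak p$ has $N\mathfrak p \ge p$. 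Since this is a textbook result (\cite[Theorem 5.31]{iwaniec2021analytic}), I would keep the write-up brief and simply point to that reference while indicating the argument above.
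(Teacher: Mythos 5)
The paper does not prove this lemma; it cites it directly as \cite[Theorem 5.31]{iwaniec2021analytic} without comment. Your sketch reproduces exactly the standard argument behind that reference (take real parts of the Hadamard-factorization identity \eqref{stark1} at $s=2+it$, bound $\Re\tilde\Gamma_K(2+it)\ll n_K\log(|t|+2)$ by Stirling, bound the Dirichlet series by $n_K$ primes above each $p$, observe positivity of $\Re\frac{1}{2+it-\rho}$ and that each zero with $|\gamma-t|\le 1$ contributes $\gg 1$), and it is correct; you have correctly identified the one place where uniformity in $n_K$ and $\log|d_K|$ must be tracked.
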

Now to compute the contribution from the $|\Im(\rho)| \leq 1$ term, we first recall a result of Stark \cite{stark1}. 
\begin{lm}
	If $K \neq \Q$, $\zeta_{K}(s)$ has at most one zero in the region
	$$ 1 -  \frac{1}{4\log | d_K|} < \Re(s) < 1, \;\;\; |\Im(s)| \leq \frac{1}{4 \log |d_k|}.$$
	If such a zero exists, it is necessarily real and simple. We refer to such a zero by Siegel zero.   
\end{lm}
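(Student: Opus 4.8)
The plan is to run the classical Landau-type explicit-formula argument of Stark \cite{stark1}, starting from the Hadamard-factorization identity \eqref{stark1} recorded above. Write $\lambda=\tfrac1{4\log|d_K|}$ (finite since $|d_K|>1$ for $K\ne\Q$) and let $R_K$ be the rectangle $1-\lambda<\Re s<1$, $|\Im s|\le\lambda$. Because a non-real zero of $\zeta_K$ in $R_K$ drags its complex conjugate into $R_K$ as a second zero, and a multiple zero counts repeatedly, it suffices to prove the single assertion: \emph{$\zeta_K$ has at most one zero in $R_K$, counted with multiplicity}. So suppose, for contradiction, that $\rho_1=\beta_1+i\gamma_1$ and $\rho_2=\beta_2+i\gamma_2$ both lie in $R_K$ (with multiplicity).

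The first step is an upper bound for $\sum_\rho\Re\frac1{\sigma-\rho}$ at a real point $\sigma=1+\delta$ with $0<\delta\le1$. Rearranging \eqref{stark1} shows that $\sum_\rho\Re\frac1{\sigma-\rho}$ is real and equals $\tfrac1\sigma+\tfrac1{\sigma-1}+\tfrac12\log|d_K|+\beta_K+\tilde\Gamma_K(\sigma)+\tfrac{\zeta_K'}{\zeta_K}(\sigma)$; using that $-\tfrac{\zeta_K'}{\zeta_K}(\sigma)=\sum_{\mathfrak a}\Lambda(\mathfrak a)(N\mathfrak a)^{-\sigma}>0$, that $\beta_K<0$, that $\tfrac1\sigma\le1$, and that $0<\tilde\Gamma_K(\sigma)\ll n_K\delta$ on $1<\sigma\le2$, one gets
\[
\sum_\rho\Re\frac1{\sigma-\rho}\;<\;1+\frac1\delta+\frac12\log|d_K|+O(n_K\delta).
\]
By Minkowski $n_K\ll\log|d_K|$ for $K\ne\Q$, so taking $\delta$ a bounded multiple of $\lambda$ absorbs the last term into $O(1)$. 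The second step is a lower bound: for $\sigma>1$ each term $\Re\frac1{\sigma-\rho}=\frac{\sigma-\Re\rho}{|\sigma-\rho|^2}$ is non-negative, so discarding all zeros but $\rho_1,\rho_2$,
\[
\sum_\rho\Re\frac1{\sigma-\rho}\;\ge\;\frac{\sigma-\beta_1}{(\sigma-\beta_1)^2+\gamma_1^2}+\frac{\sigma-\beta_2}{(\sigma-\beta_2)^2+\gamma_2^2},
\]
and since $\beta_j>1-\lambda$ and $|\gamma_j|\le\lambda$, each summand has size of order $1/\lambda=4\log|d_K|$ once $\delta$ is a suitable multiple of $\lambda$.

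Comparing the two bounds, with $\delta$ chosen to maximize the gain, yields the contradiction. For two essentially real zeros near $1-\epsilon$ one takes $\delta=(1+\sqrt2)\epsilon$, pushing the lower bound up to at least $(3-2\sqrt2)/\epsilon>0.68\log|d_K|$, which comfortably beats $\tfrac1\delta+\tfrac12\log|d_K|+O(1)$; the tightest case is a conjugate pair sitting near the corner $1-\lambda\pm i\lambda$, where $\delta$ must be taken a somewhat larger multiple of $\lambda$ and the surplus over $\tfrac12\log|d_K|$ is slim. I expect this last, purely quantitative point — pinning the constants down so that the final inequality is genuinely contradictory for \emph{every} $K\ne\Q$, i.e. so that the explicit value $\tfrac14$ in the statement (and not merely some absolute constant) is admissible — to be the only real obstacle; it is exactly the optimization performed in \cite{stark1}. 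Everything else — the identity \eqref{stark1}, the non-negativity of the individual zero terms and of the Dirichlet coefficients $\Lambda(\mathfrak a)$, and the Minkowski bound $n_K\ll\log|d_K|$ controlling the $\Gamma$-factor contribution $\tilde\Gamma_K$ — is routine.
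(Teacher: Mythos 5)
The paper does not give a proof of this lemma; it is quoted verbatim as a known result of Stark (\cite{stark1}, Lemma~3), which the text then simply invokes. Your sketch correctly reproduces the framework of Stark's proof: evaluate the Hadamard identity \eqref{stark1} at $\sigma = 1+\delta$, use positivity of $-\zeta_K'/\zeta_K$ on $(1,\infty)$, the sign of $\beta_K$, the estimate $\tilde{\Gamma}_K(\sigma)=O(n_K\delta)$, and Minkowski's bound $n_K\ll\log|d_K|$ to get the upper bound $\sum_\rho \Re\tfrac{1}{\sigma-\rho}\le \tfrac{1}{\delta}+\tfrac12\log|d_K|+O(1)$; bound the same sum from below by the two offending zero terms; choose $\delta$ a fixed multiple of $\lambda=1/(4\log|d_K|)$ to force a contradiction. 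The opening reduction (conjugate symmetry plus counting with multiplicity, so that ``at most one zero with multiplicity'' entails ``the zero, if any, is real and simple'') is also correct.

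However, the step you explicitly defer --- the optimization of $\delta$ that makes the constant $\tfrac14$ admissible --- is not a routine verification but the entire content of the lemma. Your own arithmetic shows the worst case (a conjugate pair near the corner $1-\lambda\pm i\lambda$) leaves a surplus over $\tfrac{1}{\delta}+\tfrac12\log|d_K|$ of only a few hundredths of $\log|d_K|$, and for $K\ne\Q$ the discriminant can be as small as $|d_K|=3$, so that slim margin is easily swamped by the $O(1)$ terms you suppress (the $1/\sigma\le 1$ term, the digamma contribution, and the implied constant in the Minkowski step). Stark's published argument handles precisely these issues; until they are pinned down, what your sketch actually proves is ``at most one zero in a box of width $c/\log|d_K|$ for some unspecified absolute constant $c>0$,'' not the stated $c=\tfrac14$.
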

Looking at the above lemma, we further subdivide the sum as follows (as before writing $\rho = \sigma + it$).
\begin{align*}
	 \sum_{|t|\leq 1} \left| \dfrac{(-1)^r }{(1 - \rho)^{r+1}} \right| & \leq \sum_{|t|\leq \frac{1}{4 \log |d_K|}}  \dfrac{1 }{|(1 - \rho)|^{r+1}} + \sum_{ \frac{1}{4 \log |d_K|} < |t| \leq 1}  \dfrac{1 }{|(1 - \rho)|^{r+1}}   \\
\end{align*} 
For the second sum, note that $|1-\rho| \geq |t| > \frac{1}{4 \log |d_K|}$ and therefore is $O((4\log |d_K|)^{r+2})$ (note that by Lemma \ref{numZero} the number of total terms in the sum is $O(\log|d_K|)$). 
If a Siegel zero does not exists, then for the first sum we also have $|1-\rho| \geq 1-\sigma \geq \frac{1}{4 \log |d_K|}$. Whereas, if a Siegel zero exists, say $\beta_0$, then we will have an extra term $\frac{(-1)^r}{(1-\beta_0)^{r+1}}$. Combining these we obtain the following theorem.
\begin{tm}\label{uncond_Bound}
	For any number field $K \neq \mathbb{Q}$ and $r \geq 1$, if $\zeta_K(s)$ has no  Siegel zero, then 
	$$\gamma_{K,r} = O((4\log |d_K|)^{r+2}) $$
	If $\zeta_K(s)$ has a Siegel zero $\beta_0$, then 
	$$\gamma_{K,r} = \frac{(-1)^r}{(1-\beta_0)^{r+1}} +  O((4\log |d_K|)^{r+2}).$$
	Here the implied constants are absolute.
\end{tm}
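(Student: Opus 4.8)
The plan is to feed the estimates already assembled in Section~\ref{sec:gen_uncond_bound} into the exact identity
\[
\gamma_{K,r} = (-1)^{r+1} + \sum_{\rho} \frac{(-1)^r}{(1-\rho)^{r+1}} - \frac{1}{r!}\tilde{\Gamma}_K^{(r)}(1)
\]
from \eqref{gamma_kr}, where $\rho$ runs over the nontrivial zeros of $\zeta_K(s)$. The term $(-1)^{r+1}$ is harmless, and by \eqref{gammaformula2} together with the series for $\psi^{(r)}$ one has $\tilde{\Gamma}_K^{(r)}(1) = O(r!\, n_K)$ with an absolute implied constant, so $\frac{1}{r!}\tilde{\Gamma}_K^{(r)}(1) = O(n_K) = O(\log|d_K|)$ by Minkowski's inequality $n_K \ll \log|d_K|$ (valid since $K \neq \mathbb{Q}$). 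Hence everything reduces to bounding $\sum_\rho (1-\rho)^{-(r+1)}$, which converges absolutely because $r \ge 1$.

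Next I would split the sum over $\rho = \sigma + it$ according to the size of $|t|$, into the three ranges $|t| > 1$, $\frac{1}{4\log|d_K|} < |t| \le 1$, and $|t| \le \frac{1}{4\log|d_K|}$. For $|t| > 1$, the crude bound $|1-\rho| \ge |t|$ gives $\bigl|(1-\rho)^{-(r+1)}\bigr| \le t^{-2}$, and breaking the range into unit intervals and applying Lemma~\ref{numZero} yields $\sum_{|t|>1} t^{-2} \ll \sum_{j \ge 1} j^{-2}\bigl(n_K\log j + \log|d_K|\bigr) = O(\log|d_K|)$. For $\frac{1}{4\log|d_K|} < |t| \le 1$, again $|1-\rho| \ge |t| > \frac{1}{4\log|d_K|}$, so each term is at most $(4\log|d_K|)^{r+1}$; Lemma~\ref{numZero} bounds the number of such zeros by $O(\log|d_K|)$, so this range contributes $O\bigl((4\log|d_K|)^{r+2}\bigr)$.

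The delicate part, and the crux of the argument, is the innermost range $|t| \le \frac{1}{4\log|d_K|}$. Here Stark's lemma says that at most one zero lies in the box $1 - \frac{1}{4\log|d_K|} < \sigma < 1$, $|t| \le \frac{1}{4\log|d_K|}$, and if it exists it is the real simple Siegel zero $\beta_0$. Every other zero $\rho$ in the range therefore has $\sigma \le 1 - \frac{1}{4\log|d_K|}$, whence $|1-\rho| \ge 1-\sigma \ge \frac{1}{4\log|d_K|}$, so again each such term is $\le (4\log|d_K|)^{r+1}$ and there are $O(\log|d_K|)$ of them, contributing $O\bigl((4\log|d_K|)^{r+2}\bigr)$. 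Collecting the pieces: if no Siegel zero is present, every range contributes $O\bigl((4\log|d_K|)^{r+2}\bigr)$ and the first bound follows; if $\beta_0$ exists, the single extra term $\frac{(-1)^r}{(1-\beta_0)^{r+1}}$ survives and the second bound follows. I expect the only real obstacle to be bookkeeping: confirming that the several $O(\log|d_K|)$ zero-counts multiply against the factor $(4\log|d_K|)^{r+1}$ to give exactly the exponent $r+2$, that all implied constants (from Lemma~\ref{numZero}, the digamma estimate, and Minkowski) are genuinely absolute, and that the Siegel-zero contribution is the one term that cannot be absorbed into the error.
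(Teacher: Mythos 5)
Your argument reproduces the paper's own proof of Theorem~\ref{uncond_Bound} essentially verbatim: start from identity~\eqref{gamma_kr}, bound $\tilde{\Gamma}_K^{(r)}(1)/r! = O(n_K) = O(\log|d_K|)$, split the zero sum into the ranges $|t|>1$, $\frac{1}{4\log|d_K|}<|t|\le 1$, and $|t|\le\frac{1}{4\log|d_K|}$, control the first two via $|1-\rho|\ge|t|$ together with Lemma~\ref{numZero}, and handle the innermost box via Stark's lemma, isolating the possible Siegel zero as the single non-absorbable term. The decomposition, the lemmas invoked, and the final bookkeeping match the paper exactly, so the proposal is correct and takes essentially the same route.
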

\begin{rem}
	We will not delve too much into the various results concerning bounds on $\beta_0$ and leave it to the reader to substitute their favorite one. However, it is worth mentioning that by \cite[Lemma 8]{stark1}, $\beta_0$ arises from a quadratic subfield $F \subseteq K$. If the discriminant of $F$ is denoted by $d_F$, then Siegel's bound gives $1 - \beta_0 \gg |d_F|^{-\epsilon}$. Using the fact that $|d_F|^{n_K/2}$ divides $d_K$, we get that $$\gamma_{K,r} = O\left( |d_K|^{\frac{2(r+2)\epsilon}{n_K}} \right)$$
	One also notes that by the same \cite[Lemma 8]{stark1} of Stark, if $K \neq \Q$, is a normal extension that does not contain any quadratic subfield, then $\beta_0$ does not exists.  
	\end{rem}
	\begin{rem}
		We also note that for $r = 0$, Ihara in \cite{ihara1} showed the bounds 
		\begin{align*}
			\gamma_{K,0} & \leq 2 \log(\log \sqrt{|d_K|}) \; \text{ (under GRH), and} \\
			& \geq - \log \sqrt{|d_K|} \; \text{ \hspace{0.7cm} (unconditionally).}
		\end{align*}
	In our computations we see that this disparity in the size of the lower and upper bounds continue for the higher coefficients as well. The only difference is that it alternates with the parity of $r$, being even or odd, as shown in Theorem \ref{thm:EKbound}. 	
	\end{rem}
\section{Generalizing Ihara's Methods}\label{sec:Ihara} 
 In \cite[Corollary 1]{ihara1} Ihara shows, for the function
\[ \Phi_K(x) = \frac{1}{x-1} \sum_{N(P)^k \leq x} \left( \frac{x}{N(P)^k } - 1 \right) \log N(P) \;\;\;\;\;\;\; (x>1)
 \]
 where $(P, k)$ runs over the pairs of (non-archimedean) primes $P$ of $K$ and positive integers $k$ such that $N(P)^k \leq x$, 
 \begin{equation}\label{Ihara_lim_form}
 	\lim_{x \rightarrow \infty}  (\log x - \Phi_{K}(x)) = \gamma_{K} + 1.
 \end{equation}
 
In this section we will generalize the method employed by Ihara for proving similar limit formulas for higher constants $\gamma_{K,r}$. Let us write, for brevity of notation, 
$$Z_K(s) = -  \dfrac{\z_K'(s)}{\z_K(s)} \;\; \text{ and }\;\; Z_K^{(r)}(s) = - \frac{d^r}{ds^r} \dfrac{\z_K'(s)}{\z_K(s)} $$
To deduce these formulas, we will evaluate the integral 
$$\Psi_K (\mu, r, x) = \dfrac{1}{2\pi i}\int_{c-i\infty}^{c+i \infty} \dfrac{x^{s-\mu}}{s-\mu}Z_K^{(r)}(s) \;ds \; , \;\;\;\; \text{ where } c \gg 1 ,$$ 
 for $\mu = 0$ and $1$, using two different expressions of $Z_K(s)$. One expression comes from \eqref{stark1der} and another from the Euler product 
 $$Z_K(s) =  - \frac{\z_K'(s)}{\z_K(s)} = \sum \limits_{P, k\geq 1} \dfrac{\log N(P)}{N(P)^{ks}}.$$ 
 Differentiating the above $r$ times yields 
 \begin{equation}\label{dirichletZm'}
 	Z_K^{(r)}(s)= \sum\limits_{P, \; k\geq 1} \dfrac{(-1)^r k^r (\log N(P))^{r+1}}{N(P)^{ks}}.
 \end{equation}
 The following classical formulas will come in handy. 
 \begin{equation} \label{classical11}
 	\dfrac{1}{2\pi i}\int_{c-i\infty}^{c+i \infty} \dfrac{y^s}{s} \; ds = 
 	\begin{cases}
 		0 \;\;\; & 0<y<1 \\
 		\frac{1}{2}\;\;\; & y=1 \\
 		1 \;\;\; & y>1.
 	\end{cases}
 \end{equation}
 For $n \geq 1$ 
 \begin{equation}\label{classical22}
 	\dfrac{1}{2\pi i}\int_{c-i\infty}^{c+i \infty} \dfrac{y^s}{s^{n+1}} \; ds = 
 	\begin{cases}
 		0 \;\;\; & 0<y \leq 1 \\
 		\frac{1}{n!} (\log y)^n\;\;\; & y>1.
 	\end{cases}
 \end{equation}
 \begin{lm} We have
 	\[ x\Psi_K(1,r,x) - \Psi_K(0,r,x) =  \sum \limits_{k, \; N(P)^k< \; x} \left( \dfrac{x}{N(P)^k} -1 \right) (-1)^r k^r (\log N(P))^{r+1}. \]
 \end{lm}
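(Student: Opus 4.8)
The plan is to substitute the Dirichlet series \eqref{dirichletZm'} for $Z_K^{(r)}(s)$ into the two defining integrals and evaluate term by term using the classical Mellin formula \eqref{classical11}. The essential preliminary step is to form the difference first: since $x\Psi_K(1,r,x)=\frac{1}{2\pi i}\int_{c-i\infty}^{c+i\infty}\frac{x^{s}}{s-1}Z_K^{(r)}(s)\,ds$ and $\Psi_K(0,r,x)=\frac{1}{2\pi i}\int_{c-i\infty}^{c+i\infty}\frac{x^{s}}{s}Z_K^{(r)}(s)\,ds$, we obtain
\[
x\Psi_K(1,r,x)-\Psi_K(0,r,x)=\frac{1}{2\pi i}\int_{c-i\infty}^{c+i\infty} x^{s}\Bigl(\frac{1}{s-1}-\frac{1}{s}\Bigr)Z_K^{(r)}(s)\,ds=\frac{1}{2\pi i}\int_{c-i\infty}^{c+i\infty}\frac{x^{s}}{s(s-1)}\,Z_K^{(r)}(s)\,ds.
\]
On the line $\Re(s)=c$ with $c\gg 1$ the kernel $x^{s}/(s(s-1))$ is $O(|s|^{-2})$ while $Z_K^{(r)}(s)$ is bounded there (indeed $\sum_{P,k}k^r(\log N(P))^{r+1}N(P)^{-kc}<\infty$ for $c>1$), so the iterated sum--integral is absolutely convergent and Fubini's theorem permits interchanging the sum in \eqref{dirichletZm'} with the contour integral.

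After the interchange the problem reduces, for each pair $(P,k)$, to computing $\frac{1}{2\pi i}\int_{c-i\infty}^{c+i\infty}\frac{1}{s(s-1)}\bigl(x/N(P)^k\bigr)^{s}\,ds$. Writing $y=x/N(P)^{k}$ and splitting $\frac{1}{s(s-1)}=\frac{1}{s-1}-\frac{1}{s}$, the substitution $w=s-1$ (permissible since $c>1$ keeps the shifted contour in the right half plane) together with \eqref{classical11} gives $\frac{1}{2\pi i}\int\frac{y^{s}}{s-1}\,ds=y$ and $\frac{1}{2\pi i}\int\frac{y^{s}}{s}\,ds=1$ when $y>1$, both integrals vanish when $0<y<1$, and both equal $\tfrac12$ when $y=1$; hence the combined integral equals $y-1$ for $y>1$ and $0$ for $y\le 1$. (Equivalently, shift the contour and collect the residues of $y^{s}/(s(s-1))$ at $s=1$ and $s=0$, namely $y$ and $-1$.) Translating back, the $(P,k)$-integral is $\bigl(\tfrac{x}{N(P)^k}-1\bigr)$ when $N(P)^{k}<x$ and $0$ otherwise, the boundary case $N(P)^k=x$ contributing nothing in either description.

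Finally, multiplying by the coefficient $(-1)^{r}k^{r}(\log N(P))^{r+1}$ from \eqref{dirichletZm'} and summing over all pairs $(P,k)$ leaves exactly the finite sum $\sum_{k,\;N(P)^k<x}\bigl(\tfrac{x}{N(P)^k}-1\bigr)(-1)^{r}k^{r}(\log N(P))^{r+1}$, which is the assertion. The one point needing care --- which I would flag as the crux rather than a genuine obstacle --- is the justification of the termwise integration; this is precisely why one forms $x\Psi_K(1,r,x)-\Psi_K(0,r,x)$ before expanding, trading the two conditionally convergent $|s|^{-1}$-kernels for a single absolutely convergent $|s|^{-2}$-kernel. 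Everything after that is a direct application of \eqref{classical11} and the finiteness of the right-hand sum.
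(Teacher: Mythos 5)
Your proof is correct and follows essentially the same route as the paper: substitute the Dirichlet series \eqref{dirichletZm'} into the integral and evaluate term by term via the classical Mellin formula \eqref{classical11}, noting the boundary case $N(P)^k=x$ contributes nothing. The one refinement you add is forming the difference first so the kernel becomes the absolutely convergent $x^s/(s(s-1))$, which lets you invoke Fubini cleanly; the paper interchanges sum and integral for each $O(|s|^{-1})$ kernel separately and simply observes that the two $\tfrac12$-contributions at $N(P)^k=x$ cancel, so your version is a bit more careful on the justification of the termwise integration.
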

 \begin{proof} Using \eqref{dirichletZm'} we have
 	\begin{align*}
 		& \;\;\;\; x\Psi_K(1,r,x) - \Psi_K(0,r,x) \\[1.3ex] 
 		&= x \cdot \sum \limits_{P, \; k\geq 1} \dfrac{(-1)^r k^r (\log N(P))^{r+1}}{N(P)^k} \left[ \dfrac{1}{2 \pi i } \int_{c-i\infty}^{c+\infty} \dfrac{1}{s-1} \left( \dfrac{x}{N(P)^k}\right)^{s-1} \; ds \right] \\[1.5ex] 
 		& \;\;\;\;\;\; - \sum \limits_{P, k\geq 1} (-1)^r k^r (\log N(P))^{r+1} \left[ \dfrac{1}{2 \pi i } \int_{c-i\infty}^{c+\infty} \dfrac{1}{s} \left( \dfrac{x}{N(P)^k}\right)^{s} \; ds \right] \\[1.5ex]
 		&= x \left[  \sum \limits_{k, \; N(P)^k < x} \dfrac{(-1)^r k^r (\log N(P))^{r+1}}{N(P)^k} + \sum_{k, \; N(P)^k = x}\dfrac{(-1)^r k^r (\log N(P))^{r+1}}{N(P)^k} \cdot \frac{1}{2} \right]   \\[1.5ex]
 		& \;\;\;\;\;\;\;\;\; - \left[  \sum \limits_{k, N(P)^k<x } (-1)^r k^r (\log N(P))^{r+1} + \sum \limits_{k, N(P)^k =x} (-1)^r k^r (\log N(P))^{r+1} \cdot \frac{1}{2} \right] \\[1.5ex]
 		&=  \sum \limits_{k, \; N(P)^k< \; x} \left( \dfrac{x}{N(P)^k} -1 \right) (-1)^r k^r (\log N(P))^{r+1} \\
 	\end{align*}
 	Note that the second equality follows from \eqref{classical11}. Also, the second sums in the square brackets cancel each other as $x=N(P)^k$ and there is an $x$ at the front of the first term. 
 \end{proof}
 \begin{df}
 	We define the function, for $x>1$ and $r \geq 0$,
 	$$\Phi_K(r, x) = \dfrac{1}{x-1} \sum \limits_{k, \; N(P)^k \leq \; x} \left( \dfrac{x}{N(P)^k} -1 \right) k^r (\log N(P))^{r+1} $$
 \end{df}
 \begin{rem}
 	We note that $\Phi_K(r, x)$ is the generalization of Ihara's $\Phi_K(x)$ mentioned in the beginning of this section. We note that $\Phi_K(r,x)$ is non-negative, everywhere continuous and monotonically increasing. The points of the form $x=N(P)^k$ are of main interest.  If we call these {critical points}, then $\Phi_K(r,x)$ remains $0$ until the first critical point. Then monotonically increases till the next critical point.  At the next critical point $\Phi_K(r,x)$ acquires new summands but their values are 0 at this point, which makes it continuous. We also note, $$\Phi_K(r,x) \leq (\log x)^r \;  \Phi_K(0,x) \leq n_K (\log x)^r \; \Phi_{\mathbb{Q}}(0,x) \leq n_K (\log x)^{r+1} $$
 The first two inequalities are trivial. For the last one we note that, both $\log x$ and $\Phi_{\mathbb{Q}}(0,x)$ are monotonically increasing, thus it is sufficient to prove the inequality at integer points. We'll in fact show $\Phi_{\Q}(0, n+1) < \log n$.  To see this, we first recall that the exponent of a prime $p$ in the factorization of $(n!)$ is given by $\left\lfloor  \frac{n}{p}\right\rfloor + \left\lfloor  \frac{n}{p^2}\right\rfloor + \cdots + \left\lfloor  \frac{n}{p^k}\right\rfloor $, where $k$ is the highest exponent such that $p^k \leq n$. Therefore \begin{align*}
 	 \log (n!)  &= \sum_{p^k \leq n} \left\lfloor  \frac{n}{p^k}\right\rfloor  \log p  \geq \sum_{p^k \leq n} \left(\frac{n+1}{p^k} - 1 \right) \log p = n \Phi_{\mathbb{Q}}(0, n+1),\\
 	\text{and so, } \;  \Phi_{\mathbb{Q}}(0, n+1) & \leq \frac{\log(n!)}{n} \leq \log n.
 \end{align*} 
 \end{rem}
  We note that 
  \begin{equation}\label{PhiEuler}
  	x\Psi_K(1,r,x) - \Psi_K(0,r,x) = (x-1) (-1)^r \Phi_K(r,x).
  \end{equation}
  Now let us compute $	x\Psi_K(1,r,x) - \Psi_K(0,r,x)$ using the expression coming from \eqref{stark1der}, namely,
  \begin{equation*}
	Z_K^{(r)}(s) = \frac{(-1)^r r!}{(s-1)^{r+1}}+ \frac{(-1)^r r!}{s^{r+1}}  +  \sum \frac{(-1)^{r+1} r!}{(s-\rho)^{r+1}} + \tilde{ \Gamma}^{(r)}_K(s). \;\;\;\;
  \end{equation*}
  We will consider contribution from each of these sums separately and record them as lemmas. Let us denote the contribution from the terms $\frac{(-1)^r r!}{(s-1)^{r+1}}+ \frac{(-1)^r r!}{s^{r+1}}$ by $\mathfrak{F}(r,x)$, i.e.
  \begin{equation}\label{Cont1/s}
  	\mathfrak{F}(r,x) = \frac{(-1)^r r!}{2 \pi i} \int_{c-i\infty}^{c + i \infty}  x^s \left[ \frac{1}{s-1} \left( \frac{1}{s^{r+1}} + \frac{1}{(s-1)^{r+1}}\right) - \frac{1}{s} \left( \frac{1}{s^{r+1}} + \frac{1}{(s-1)^{r+1}} \right) \right] ds \;.
  \end{equation}
 Similarly denote contribution from $\sum \frac{(-1)^{r+1} r!}{(s-\rho)^{r+1}}$ by $\mathfrak{Z}(r,x)$ and from that of $\tilde{ \Gamma}^{(r)}_K(s)$ by $\mathfrak{G}(r,x)$. Thus we have,
 \begin{equation}\label{Contzero}
 \mathfrak{Z}(r,x)= \frac{(-1)^{r+1} r!}{2 \pi i} \int_{c - i \infty}^{c + \infty} \sum x^s \left[ \frac{1}{(s-1)(s- \rho)^{r+1}} - \frac{1}{s(s-\rho)^{r+1}}\right]ds, \text{ and}
 \end{equation}
 \begin{equation} \label{Contgamma}
 	\mathfrak{G}(r,x) = \frac{x}{2 \pi i} \int_{c - i \infty}^{c + \infty} \frac{x^{s-1}}{s-1 } \tilde{ \Gamma}^{(r)}_K(s) \; ds \; - \;  \frac{1}{2 \pi i} \int_{c - i \infty}^{c + \infty} \frac{x^{s}}{s } \tilde{ \Gamma}^{(r)}_K(s) \; ds.
 \end{equation}
Combining these together we have
 \begin{equation}\label{Contotal}
 	x\Psi_K(1,r,x) - \Psi_K(0,r,x) = 	\mathfrak{F}(r,x) + \mathfrak{Z}(r,x) + \mathfrak{G}(r,x). 
 \end{equation}
 \begin{lm}\label{lem:Mainterm}
 	For $x>1$, consider the recursively defined function
 \begin{align*}
 	f(0,x) &=  \log x,\\
 	f(1,x) &=   \left[ 2+  \frac{1+x}{1-x} \log x +\frac{1}{2}   (\log x )^2 \right] \;\; \text{ and for }r \geq 2  \\
 	f(r,x) &= \frac{1}{r+1} (\log x)^{r+1} \; + \; \frac{1+x}{1-x}(\log x)^{r} + r(r-1)f(r-2,x).
 \end{align*}
 Then $\; \mathfrak{F}(r,x) = (-1)^r (x-1) f(r,x)$.
 \end{lm}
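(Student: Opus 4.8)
The goal is to evaluate the contour integral $\mathfrak{F}(r,x)$ defined in \eqref{Cont1/s} and show it equals $(-1)^r(x-1)f(r,x)$. The key observation is that the bracketed integrand, after multiplying by $x^s$, splits into four integrals of the shape appearing in the classical formulas \eqref{classical11} and \eqref{classical22}. Concretely, rewriting $\frac{1}{s-1} - \frac{1}{s} = \frac{1}{s(s-1)}$ where it's legal, one sees
\[
\mathfrak{F}(r,x) = \frac{(-1)^r r!}{2\pi i}\int_{c-i\infty}^{c+i\infty} x^s\left[\frac{1}{s(s-1)}\cdot\frac{1}{s^{r+1}} + \frac{1}{s(s-1)}\cdot\frac{1}{(s-1)^{r+1}}\right]ds,
\]
so the task reduces to computing $\frac{1}{2\pi i}\int x^s \frac{ds}{s^{r+2}(s-1)}$ and $\frac{1}{2\pi i}\int x^s \frac{ds}{s(s-1)^{r+2}}$ for $x>1$ by partial fractions. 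Each partial-fraction term is of the form $\frac{1}{s^j}$ or $\frac{1}{(s-1)^j}$, evaluated via \eqref{classical11}--\eqref{classical22}; note $\frac{1}{2\pi i}\int \frac{x^s}{(s-1)^{n+1}}ds = \frac{x}{n!}(\log x)^n$ by shifting $s\mapsto s+1$.

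**Carrying it out.** First I would do the partial fraction decomposition $\frac{1}{s^{r+2}(s-1)} = \frac{1}{s-1} - \sum_{j=1}^{r+2}\frac{1}{s^j}$ and $\frac{1}{s(s-1)^{r+2}} = -\frac{1}{s} + \sum_{j=1}^{r+2}\frac{1}{(s-1)^j}$ (these follow from the geometric-type identity $\frac{1}{s^{r+2}(s-1)} = \frac{1}{s^{r+2}}\cdot\frac{1}{s-1}$ and iterating, or directly from residues). Applying the classical formulas termwise, the first integral contributes $x - \bigl(1 + \log x + \tfrac{(\log x)^2}{2!}+\cdots+\tfrac{(\log x)^{r+1}}{(r+1)!}\bigr)$, i.e. a truncated exponential, and the second contributes $-1 + x\bigl(1 + \log x + \cdots + \tfrac{(\log x)^{r+1}}{(r+1)!}\bigr)$. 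Adding, multiplying by $(-1)^r r!$, and collecting by powers of $\log x$ should produce the closed form; the recursion in $f$ then has to be verified by checking that $f(r,x)$ as I've computed it directly satisfies $f(r,x) = \frac{1}{r+1}(\log x)^{r+1} + \frac{1+x}{1-x}(\log x)^r + r(r-1)f(r-2,x)$, and matches the stated base cases $f(0,x)=\log x$ and $f(1,x)=2+\frac{1+x}{1-x}\log x+\frac12(\log x)^2$.

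**The main obstacle.** The delicate point is bookkeeping the combinatorial coefficients: after multiplying the two truncated sums by $(-1)^r r!$ and grouping, the coefficient of $(\log x)^k$ involves $\frac{r!}{k!}$ and the factor $\frac{1+x}{1-x}$ must be extracted correctly from the $x$-dependent versus constant pieces (the $x\cdot\frac{(\log x)^k}{k!}$ term from the second integral and the $-\frac{(\log x)^k}{k!}$ from the first combine as $(x-1)\frac{(\log x)^k}{k!}$ only for the matching powers, while the leading term $\frac{(\log x)^{r+1}}{(r+1)!}$ pairs into $\frac{x-1}{r+1}(\log x)^{r+1}/r!$ after the $r!$ multiplier). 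Rather than verify the closed form for general $r$ head-on, the cleanest route is probably to establish the recursion directly at the level of the integrals: show that $\mathfrak{F}(r,x)$ and $\mathfrak{F}(r-2,x)$ are related by $\mathfrak{F}(r,x) = (-1)^r(x-1)\bigl[\frac{1}{r+1}(\log x)^{r+1} + \frac{1+x}{1-x}(\log x)^r\bigr] + r(r-1)\mathfrak{F}(r-2,x)$, which follows from the partial-fraction identity $\frac{1}{s^{r+2}} = \frac{1}{s^r}\cdot\frac{1}{s^2}$ relating the tails, together with explicit computation of the two base cases $r=0$ and $r=1$ using \eqref{classical11}--\eqref{classical22}. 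This sidesteps the need to resum the truncated exponentials in closed form and makes the factor $r(r-1)$ transparent (it comes from $\frac{d^2}{ds^2}$-type shifts between consecutive even/odd orders).
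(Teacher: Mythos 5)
Your final recommendation (establish the recursion at the level of the contour integrals, together with the two base cases $r=0,1$) is exactly the strategy the paper uses, so you have landed on the right route. But two things in the proposal as written need repair.

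First, the partial-fraction decomposition you use for the second integral is incorrect. You write $\frac{1}{s(s-1)^{r+2}} = -\frac{1}{s} + \sum_{j=1}^{r+2}\frac{1}{(s-1)^j}$, but the correct decomposition alternates in sign:
\[
\frac{1}{s(s-1)^{n}} \;=\; \frac{(-1)^n}{s} \;+\; \sum_{j=1}^{n}\frac{(-1)^{n-j}}{(s-1)^j}.
\]
(At $n=2$ your version gives $\frac{2s-1}{s^2(s-1)}$ after clearing denominators, not $\frac{1}{s^2(s-1)}$.) Consequently the claim that ``the second contributes $-1 + x(1+\log x + \cdots + \frac{(\log x)^{r+1}}{(r+1)!})$'' is wrong: the actual contribution is an alternating sum, not a truncated exponential. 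Already at $r=0$ your route would return $\mathfrak{F}(0,x) = (x-1)(2+\log x)$ instead of $(x-1)\log x = (x-1)f(0,x)$.

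Second, the justification you offer for the recursion is not an argument: neither ``$\frac{1}{s^{r+2}}=\frac{1}{s^r}\cdot\frac{1}{s^2}$ relating the tails'' nor ``$\frac{d^2}{ds^2}$-type shifts'' actually reduces $\mathfrak{F}(r,x)$ to $\mathfrak{F}(r-2,x)$, nor explains the factor $r(r-1)$. What does the work in the paper is two successive applications of the identity $\frac{1}{s(s-1)}=\frac{1}{s-1}-\frac{1}{s}$ and its mirror with $s$ and $s-1$ exchanged:
\[
\frac{1}{s^{r+1}(s-1)} - \frac{1}{s(s-1)^{r+1}} \;=\; \frac{1}{s^{r}(s-1)} - \frac{1}{s^{r+1}} + \frac{1}{s(s-1)^{r}} - \frac{1}{(s-1)^{r+1}},
\]
which together with the terms $\frac{1}{(s-1)^{r+2}}-\frac{1}{s^{r+2}}$ peels off the $\frac{(-1)^r}{r+1}(x-1)(\log x)^{r+1}$ and $(-1)^{r+1}(1+x)(\log x)^r$ contributions, and then one more pass on $\frac{1}{s^{r}(s-1)}+\frac{1}{s(s-1)^{r}}$ reproduces the integrand of $\mathfrak{F}(r-2,x)$; the factor $r(r-1)$ is then just $r!/(r-2)!$ coming from the $(-1)^r r!$ prefactor. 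If you carry out those two reductions and compute the base cases $r=0,1$ directly from \eqref{classical11}--\eqref{classical22}, you reproduce the paper's strong-induction proof.
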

 \begin{proof} We'll proceed by strong induction on $r$. Also, we will repeatedly apply partial fraction decomposition and \eqref{classical11} and \eqref{classical22}. For $r=0$, 
 	\begin{align*}
 		\nonumber \mathfrak{F}(0,x)  & =  \frac{1}{2 \pi i}\int_{c-i \infty}^{c+i \infty}  \;  x^s \left[  \dfrac{1}{s(s-1)}  +  \dfrac{1}{(s-1)^{2}}  - \dfrac{1}{s^{2}} - \dfrac{1}{s(s-1)}\right] \; ds \\[1.2ex]
 		&=(x-1) \log x = (-1)^0 (x-1) f(0,x).
 	\end{align*}
 	For $r=1$, we have 
 	\begin{align*}
 	 \mathfrak{F}(1,x)  & =  \frac{(-1)}{2 \pi i}\int_{c-i \infty}^{c+i \infty}  \;  x^s \left[  \dfrac{1}{s^{2} (s-1)}  +  \dfrac{1}{(s-1)^{3}}  - \dfrac{1}{s^{3}} - \dfrac{1}{s(s-1)^{2}}\right] \; ds \\[1.2ex]
 	\nonumber 	&= (-1)  \left[ x \frac{(\log x)^{2}}{2!} -  \frac{(\log x)^{2}}{2!} \right]  \; \; +  \frac{(-1) }{2 \pi i}\int_{c-i \infty}^{c+i \infty}  \;  x^s \left[  \dfrac{1}{s^{2} (s-1)}  - \dfrac{1}{s(s-1)^{2}}\right] \; ds  \\[1.2ex]
 	\nonumber	& = \frac{(-1)}{2} (x-1) (\log x)^{2} \;\;  +  \;  \frac{(-1)}{2 \pi i}\int_{c-i \infty}^{c+i \infty}  \; x^s \left[  \dfrac{1}{s (s-1)} - \dfrac{1}{s^{2}}  + \dfrac{1}{s(s-1)} - \dfrac{1}{(s-1)^{2}} \right]  \; ds \\[1.2ex] 
 	\nonumber & = \frac{(-1)}{2} (x-1) (\log x)^{2} \; + \; (-1)^{2}(x+1)(\log x) \; + 
  \;  \frac{(-1) 2}{2 \pi i}\int_{c-i \infty}^{c+i \infty}  \; x^s \left[  \dfrac{1}{(s-1)}   - \dfrac{1}{s}  \right]  \; ds  \\[1.2ex]
 	\nonumber	 & =\frac{(-1)}{2} (x-1) (\log x)^{2} \; + \; (-1)^{2}(x+1)(\log x) \; + (-1) 2 (x-1) \\
 	&= (-1)(x-1) \left[ \frac{1}{2}   (\log x )^2 + \frac{1+x}{1-x} \log x +2\right] = (-1)^{1}(x-1) f(1,x).
 	\end{align*}
 	\begin{align*}
 		\nonumber \mathfrak{F}(r,x) & =  \frac{(-1)^r r!}{2 \pi i}\int_{c-i \infty}^{c+i \infty}  \;  x^s \left[  \dfrac{1}{s^{r+1} (s-1)}  +  \dfrac{1}{(s-1)^{r+2}}  - \dfrac{1}{s^{r+2}} - \dfrac{1}{s(s-1)^{r+1}}\right] \; ds \\[1.2ex]  
 		&= (-1)^r r! \left[ x \frac{(\log x)^{r+1}}{(r+1)!} -  \frac{(\log x)^{r+1}}{(r+1)!} \right]  + \frac{(-1)^r r! }{2 \pi i}\int_{c-i \infty}^{c+i \infty}  x^s \left[  \dfrac{1}{s^{r+1} (s-1)}  - \dfrac{1}{s(s-1)^{r+1}}\right]ds  
 			\end{align*}
 		\begin{align*}
 		& = \frac{(-1)^r}{r+1} (x-1) (\log x)^{r+1}  +  \frac{(-1)^r r!}{2 \pi i}\int_{c-i \infty}^{c+i \infty}  \; x^s \left[  \dfrac{1}{s^{r} (s-1)} - \dfrac{1}{s^{r+1}}  + \dfrac{1}{s(s-1)^{r}} - \dfrac{1}{(s-1)^{r+1}} \right] ds \\
 		\nonumber & = \frac{(-1)^r}{r+1} (x-1) (\log x)^{r+1} \; + \; (-1)^{r+1}(x+1)(\log x)^{r} \; + \;\;  \\[1.2ex]
 		\nonumber	& \; \text{\hspace{8cm}} \frac{(-1)^r r!}{2 \pi i}\int_{c-i \infty}^{c+i \infty}  \; x^s \left[  \dfrac{1}{s^{r} (s-1)}   + \dfrac{1}{s(s-1)^{r}}  \right]  \; ds  \\[1.2ex]
 		\nonumber	 & =\frac{(-1)^r}{r+1} (x-1) (\log x)^{r+1} \; + \; (-1)^{r+1}(x+1)(\log x)^{r} \; + \;\;  \\[1.2ex]
 		\nonumber	& \; \text{\hspace{4cm}}   \frac{(-1)^{r} (r)!}{2 \pi i}\int_{c-i \infty}^{c+i \infty}  \; x^s \left[  \dfrac{1}{s^{r-1} (s-1)}  - \dfrac{1}{s^{r}}  - \dfrac{1}{s(s-1)^{r-1}} + \dfrac{1}{(s-1)^r} \right]  ds  \\[1.2ex]
 		\nonumber	& = \frac{(-1)^r}{r+1} (x-1) (\log x)^{r+1} \; + \; (-1)^{r+1}(1+x)(\log x)^{r}+ r(r-1) \mathfrak{F}(r-2,x)  \\[1.2ex]
 			\nonumber & = \frac{(-1)^r}{r+1} (x-1) (\log x)^{r+1} \; + \; (-1)^{r+1}(1+x)(\log x)^{r}+ r(r-1) (-1)^{r-2} (x-1) f(r-2,x) \\[1.2ex]
 		\nonumber	&= (-1)^r (x-1) f(r,x).
 	\end{align*}
 	We note that the second last equality follows from strong induction.
 \end{proof}
 We now focus on the contribution coming from the non-trivial zeros and compute $\mathfrak{Z}(r,x)$ as defined in \eqref{Contzero}. For this, we will do some contour manipulation. As in Figure \ref{fig1} below, for large $T$ and $R$ (to be chosen later), take the contour $C_{R,T} \;$ to be the rectangle : $c-iT \rightarrow c+iT \rightarrow -R+iT \rightarrow -R-iT \rightarrow c-iT$.\\
 \begin{figure}[H]
 	\centering
 	\begin{tikzpicture}[decoration={markings,
 			mark=at position 1cm   with {\arrow[line width=1pt]{stealth}},
 			mark=at position 4.5cm with {\arrow[line width=1pt]{stealth}},
 			mark=at position 7cm   with {\arrow[line width=1pt]{stealth}},
 			mark=at position 9.5cm with {\arrow[line width=1pt]{stealth}}
 		}]
 		\draw[thick, ->] (-6,0) -- (6,0) coordinate (xaxis);
 		
 		\draw[thick, ->] (2,-3) -- (2,3) coordinate (yaxis);
 		
 		\node[above] at (xaxis) {$\mathrm{Re}(s)$};
 		
 		\node[right]  at (yaxis) {$\mathrm{Im}(s)$};
 		
 		\path[draw,Burgundy, line width=0.8pt, postaction=decorate] (4,-2) node[below, Burgundy] {$c-iT$}
 		-- node[midway, right, black] {} (4,2)node[above, Burgundy] {$c+iT$}
 		-- node[midway, right, black] {}(-4,2) node[above, Burgundy] {$-R+iT$};
 		\path[draw,Burgundy, line width=0.8pt, postaction=decorate](-4,2)
 		-- node[midway, left, black] {} (-4,-2) node[below, Burgundy] {$-R-iT$}
 		-- node[midway, above, black] {}(4,-2);
 		\foreach \Point/\PointLabel in {(4,-2)/ , (4,2)/ , (-4,2)/ , (-4,-2)/ , (2,0)/O, (2.5,0)/ \frac{1}{2}, (3,0)/1, (1,0)/-1, (0,0)/-2, (-1,0)/-3, (-2,0)/\cdots }
 		\draw[fill=black] \Point circle (0.05) node[above right] {$\PointLabel$};
 		\draw[densely dotted] (2.5, -3)--(2.5, 3);
 	\end{tikzpicture}
 	\caption{} \label{fig1}
 \end{figure}
 
 We're interested only in the side $c-iT \rightarrow c+iT$. We'll show that for specific choice of $R$, the contribution from the other three sides of the rectangle goes to $0$ as $T \rightarrow \infty$.  
 
 On the side $ \{ -R+iT \rightarrow -R-iT \}$,  writing $\rho = \beta + i \gamma$ and $ \; s = -R + it $  and so, $ds = idt$ we have,
 \begin{align*}
 	\left| \int_{-R+ i T}^{-R - iT} \dfrac{x^{s}}{s} \cdot \dfrac{1}{(s- \rho)^{r+1}} \; ds \right| \;\; & \;\;  \leq \int_{-T}^{T} \dfrac{x^{-R}}{ \sqrt{R^2 + t^2}( ( R+ \beta)^2 + (t-\gamma)^2)^{\frac{r+1}{2}}} \; dt   \\[1.3ex]
 	&\;\; \leq \frac{x^{-R}}{R^{r+2}}  \int_{-T}^{T} dt \leq 2 x^{-R} \frac{T}{R^{r+2}}.
 \end{align*}
 Similarly, for the integral on $\{ c+iT \rightarrow -R+iT \} $, writing $s = \sigma + i T$ 
 \begin{align*}
 	\left| \int_{c+i T}^{-R + iT} \dfrac{x^{s}}{s} \cdot \dfrac{1}{(s- \rho)^{r+1}} \; ds \right| \;\; & \;\;  \leq \int_{c}^{-R} \dfrac{x^{\sigma}}{ \sqrt{\sigma^2 + T^2}( ( \sigma - \beta)^2 + (T-\gamma)^2)^{\frac{r+1}{2}}} \; d \sigma   \\[1.3ex]
 	&\;\; \leq \frac{x^c}{T}  \int_{c}^{-R} \frac{d \sigma}{(\sigma - \beta)^{r+1}} =    \frac{x^c}{T} \left[ \frac{-1}{r(\sigma - \beta)^r}\right]_{c}^{-R} \;\;(\text{ for } r \geq 1) \\[1.3ex]
 	&=\frac{x^c}{rT}   \left[ \frac{(-1)^{r+1}}{(R + \beta)^r} + \frac{1}{(c-\beta)^r}\right]  \\[1.3ex]
 	&\ll \frac{x^c}{rT}.
 \end{align*}
 Note that the last inequality follows from, $0 < \beta < 1$ and we can choose $c\geq2$, so that $c- \beta \geq 1$. Similarly one can show that on the side $\{ -R-iT \rightarrow c-iT\}$, the integral is $O(\frac{x^c}{rT})$.  Thus by choosing $R=T$ and letting, $T \rightarrow \infty$ we see that these integrals go to zero. The computations are similar for the terms of the form $\frac{x^{s}}{(s-1)(s-\rho)^{r+1}}$. Therefore by residue theorem, the line integral on $s=c$ is same as the residue at the poles to the left of $c$.We now compute these residues. From \eqref{Contzero}, we see that the first term of $\mathfrak{Z}(r,x)$ has a simple pole at  $s=1$, and has a pole of order $r+1$ at $s=\rho$, for each $\rho$. Similarly, the second term has a simple pole at $s=0$ and pole of order $r+1$ at $s= \rho$. \\ The residue at $s=0$ is $(-1)^{r+2} r! \sum \dfrac{1}{(- \rho)^{r+1}} = - r! \sum\dfrac{1}{\rho^{r+1}}$. \\ The residue at $s=1$ is  $(-1)^{r+1} r! x \sum \dfrac{1}{(1- \rho)^{r+1}}$.\\ The pole at $\rho$ for the second term has residue 
\begin{align*}
	(-1)^{r+2} r! \;   \lim_{ s \rightarrow \rho} \dfrac{d^r}{ds^r}\left(\dfrac{x^s}{s} \right) 
	& = (-1)^{r+2} r! \;   \lim_{ s \rightarrow \rho} \; \sum_{k=0}^{r} \binom{r}{k} \frac{d^{(r-k)}}{ds^{(r-k)}} x^s \cdot \frac{d^k}{ds^k} \frac{1}{s}\\
	& = (-1)^{r+2} r!  \lim_{ s \rightarrow \rho} \; \sum_{k=0}^{r} \binom{r}{k} x^s (\log x)^{r-k} \frac{(-1)^k k!}{s^{k+1}}\\
	& = (-1)^{r+2} r!  \; \sum_{k=0}^{r} \binom{r}{k} x^{\rho} (\log x)^{r-k} \frac{(-1)^k k!}{\rho^{k+1}}.
\end{align*}

Computations for the first term are very similar and the residue is
\begin{align*}
	&(-1)^{r+1} r! \;   \lim_{ s \rightarrow \rho} \dfrac{d^r}{ds^r}\left(\dfrac{x^s}{s-1} \right)  = (-1)^{r+1} r!  \; \sum_{k=0}^{r} \binom{r}{k} x^{\rho} (\log x)^{r-k} \frac{(-1)^k k!}{(\rho-1)^{k+1}}.
\end{align*}
We summarize these computations in the following lemma.
\begin{lm}\label{lem:Roots} For $r \geq 1$, we have
	$$\lim_{x \rightarrow \infty}\frac{\mathfrak{Z}(r,x)}{(x-1)} = (-1)^{r+1} r!  \sum \frac{1}{(1- \rho)^{r+1}}.   $$
\end{lm}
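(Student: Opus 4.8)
The plan is to assemble the residue computations just carried out and then pass to the limit. By the contour estimates above, shifting the line $\Re(s)=c$ to the left and letting $R=T\to\infty$ shows that $\mathfrak{Z}(r,x)$ equals the sum of the residues of the integrand in \eqref{Contzero} at all poles with $\Re(s)<c$: the simple pole at $s=1$, the simple pole at $s=0$, and a pole of order $r+1$ at each non-trivial zero $\rho$. The residues at $s=1$ and $s=0$ were found above to be $(-1)^{r+1}r!\,x\sum_\rho(1-\rho)^{-(r+1)}$ and $-r!\sum_\rho\rho^{-(r+1)}$, while the combined residue at $s=\rho$ (using $\frac{1}{(s-1)(s-\rho)^{r+1}}-\frac{1}{s(s-\rho)^{r+1}}=\frac{1}{s(s-1)(s-\rho)^{r+1}}$ and Leibniz' rule) is of the shape $(-1)^{r+1}\sum_{j=0}^{r}\binom{r}{j}(\log x)^{r-j}x^{\rho}(-1)^{j}j!\big((\rho-1)^{-(j+1)}-\rho^{-(j+1)}\big)$. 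Dividing by $x-1$ therefore gives three pieces, and it remains to identify their limits.

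For the first piece, $(-1)^{r+1}r!\,\tfrac{x}{x-1}\sum_\rho(1-\rho)^{-(r+1)}$: since $|1-\rho|\ge|\Im\rho|$ we have $(1-\rho)^{-(r+1)}=O(|\Im\rho|^{-(r+1)})$ for zeros of large height, so Lemma \ref{numZero} makes $\sum_\rho(1-\rho)^{-(r+1)}$ absolutely convergent for $r\ge 1$; as $\tfrac{x}{x-1}\to 1$ this piece tends to $(-1)^{r+1}r!\sum_\rho(1-\rho)^{-(r+1)}$, the claimed value. The second piece is a fixed finite constant (again $\sum_\rho\rho^{-(r+1)}$ converges for $r\ge 1$) divided by $x-1$, hence tends to $0$. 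So the lemma reduces to showing that the third piece, $\tfrac{1}{x-1}\sum_\rho R_\rho(x)$ with $R_\rho(x)$ the residue at $\rho$, tends to $0$.

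This last point is the crux. Each coefficient $(\rho-1)^{-(j+1)}-\rho^{-(j+1)}$ is $O(|\rho|^{-2})$ (trivially so for the finitely many $\rho$ of small modulus), so by Lemma \ref{numZero} each inner sum $\sum_\rho x^{\rho}\big((\rho-1)^{-(j+1)}-\rho^{-(j+1)}\big)$ converges absolutely with the crude bound $O_K(x)$; but this alone is not enough, because of the accompanying factor $(\log x)^{r-j}/(x-1)$, which does not decay. One must instead exploit that $x^{\Re\rho}/x$ is small since $\Re\rho<1$. Following Ihara's treatment of $r=0$ in \cite{ihara1}, I would split the sum over $\rho$ at height $|\Im\rho|=Y$ with $Y=e^{\sqrt{\log x}}$. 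For $|\Im\rho|>Y$ one bounds $|x^{\rho}|\le x$ and uses $\sum_{|\Im\rho|>Y}|\rho|^{-2}\ll_K \log Y/Y$, a consequence of Lemma \ref{numZero}, so this range contributes $\ll_K(\log x)^{r-j}\sqrt{\log x}\,e^{-\sqrt{\log x}}\to 0$. For $|\Im\rho|\le Y$ the classical zero-free region for $\zeta_K$ gives $\Re\rho\le 1-c_K/\log(|\Im\rho|+2)\le 1-c_K/(2\sqrt{\log x})$ for $x$ large, whence $x^{\Re\rho}\ll x\,e^{-(c_K/2)\sqrt{\log x}}$ and this range contributes $\ll_K(\log x)^{r-j}e^{-(c_K/2)\sqrt{\log x}}\to 0$; a possible Siegel zero $\beta_0$ is a single term with a fixed $\beta_0<1$, hence negligible since $x^{\beta_0-1}(\log x)^{r-j}\to 0$. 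Summing over $0\le j\le r$ finishes the proof.

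The main obstacle is exactly this balancing act: the higher derivatives of $x^{s}/(s(s-1))$ produce powers $(\log x)^{r-j}$ absent in Ihara's $r=0$ case, and one must choose the splitting height $Y$ so that the zero-free-region savings on $x^{\Re\rho}$ beat them while the tail of the zero-counting estimate from Lemma \ref{numZero} stays negligible; the choice $Y=e^{\sqrt{\log x}}$, familiar from the error term in the prime number theorem, achieves both. Everything else — the contour shift and the residue evaluations — has already been done above.
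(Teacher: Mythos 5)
Your proposal is correct and follows essentially the same strategy as the paper: collect residues from the shifted contour, identify the limit of the $s=1$ residue term as the answer, note the $s=0$ residue term vanishes after dividing by $x-1$, and then show the sum of the order-$(r+1)$ residues at the $\rho$'s tends to zero by splitting at a height $Y$ with $\log Y \asymp \sqrt{\log x}$, using a classical zero-free region for $\zeta_K$ on the low zeros and Lemma~\ref{numZero} for the tail. The only cosmetic difference is your split parameter ($|\Im\rho|\lessgtr e^{\sqrt{\log x}}$) versus the paper's equivalent split on $\log|d_K|+n_K\log(|\gamma|+2)\lessgtr\sqrt{\log x}$, which matches the form of the Lagarias--Odlyzko zero-free region the paper cites; both choices make the two ranges decay for the same reasons, and your explicit handling of a possible Siegel zero corresponds to the paper's exclusion of ``finitely many low hanging $\rho$'s.''
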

\begin{proof}
	We note that 
	\begin{align*}
		\mathfrak{Z}(r,x) &=  x r!   (-1)^{r+1}  \sum_{\rho} \frac{1}{(1- \rho)^{r+1}} - \sum_{\rho}\dfrac{r!}{\rho^{r+1}} \\ &+ \sum_{\rho} \left( (-1)^{r+1} \sum_{k=0}^{r} (-1)^k k! \binom{r}{k} x^{\rho} (\log x)^{r-k} \left[ \frac{1}{(\rho-1)^{k+1}} - \frac{1}{\rho^{k+1}} \right] \right)\\[1.2ex]
		&= (x-1) r!  (-1)^{r+1}  \sum_{\rho} \frac{1}{(1- \rho)^{r+1}} + [(-1)^{r+1} - 1]\sum \dfrac{r!}{\rho^{r+1}} \\ &+ \sum_{\rho} \left( (-1)^{r+1} \sum_{k=0}^{r} (-1)^k k! \binom{r}{k} x^{\rho} (\log x)^{r-k} \left[ \frac{1}{(\rho-1)^{k+1}} - \frac{1}{\rho^{k+1}} \right] \right)\\[1.2ex]
		\frac{\mathfrak{Z}(r,x)}{x-1} & = r!  (-1)^{r+1}  \sum_{\rho} \frac{1}{(1- \rho)^{r+1}} + \frac{[(-1)^{r+1} - 1]}{(x-1)}\sum \dfrac{r!}{\rho^{r+1}} \\ &+\sum_{\rho} \left( (-1)^{r+1} \sum_{k=0}^{r} (-1)^k k! \binom{r}{k} \left[ \frac{1}{(\rho-1)^{k+1}} - \frac{1}{\rho^{k+1}} \right] \frac{ x^{\rho} (\log x)^{r-k}}{(x-1)}\right).
	\end{align*}
	We first look at the second sum.  For $r$ odd, this term is zero anyway. For $r \geq 1$ even, the series is absolutely convergent and so as $x \rightarrow \infty$, this term goes to $0$. If we write $\rho = \beta + i \gamma$, then note that the last sum, excluding finitely many low hanging $\rho$'s,  is $O \left( 2^r r! (\log x)^r \sum_{\rho}\dfrac{x^{\beta-1}}{\gamma^2} \right)$. Since there is $(x-1)$ in the denominator, and $x \rightarrow \infty$,   excluding finitely many $\rho$'s does not have any effect, thus we only need to look at the sum $\sum_{\rho} \frac{x^{\beta - 1}}{\gamma^2}$, for $|\gamma|$ sufficiently large. \\
	
	We make use of standard zero-free regions of the Dedekind zeta function. In particular, we will use the following of \cite[Lemma 8.1]{lagarias1} which states that there is an absolute, effectively computable positive constant $c$ such that $\z_K(s)$ has no zeros $\rho = \beta + i \gamma$ in the region $$ |\gamma| \geq \frac{1}{1 + 4 \log |d_K|} \;\;, \;\;\;\; \beta \geq 1-\frac{c}{\log |d_K| + n_K \log(|\gamma| + 2)}.$$
Thus for $|\gamma|$ sufficiently large,
	\begin{align}
		\nonumber & \sum \dfrac{ \; x^{\beta - 1}}{\gamma^2} < \sum  \dfrac{ \; x^{-c (\log |d_K| + n_K \log(|\gamma| + 2))^{-1}}}{\gamma^2}
	= \sum_{ \log |d_K| + n_K \log(|\gamma| + 2) \; < \; T} \;\; + \sum_{ \log |d_K| + n_K \log(|\gamma| + 2) \; \geq \; T}
	\end{align}
	where we will choose $T = \sqrt{\log x} \;$. Thus for the first sum, we have 
	\begin{align*}
		& \sum_{ \log |d_K| + n_K \log(|\gamma| + 2) \; < \; T} \dfrac{ x^{-c (\log |d_K| + n_K \log(|\gamma| + 2))^{-1}}}{\gamma^2}   <  \; \sum  \dfrac{ x^{-cT^{-1}}}{\gamma^2}	=\left(\sum \dfrac{1}{\gamma^2}\right) \exp({-c \sqrt{\log x}}).
	\end{align*}
	Note that the last equality follows from $$ \exp({-c \sqrt{\log x}}) = \exp(-c \log x (\sqrt{\log x})^{-1}) = \exp(\log x^{-cT^{-1}}) = x^{-cT^{-1}}$$
	Now as $x \rightarrow \infty$, clearly $\exp({-c \sqrt{\log x}}) \rightarrow 0$. We also note that 
	$$\lim_{x \rightarrow \infty} \dfrac{(\log x)^r}{e^{c \sqrt{\log x}}} = \lim_{y \rightarrow \infty} \dfrac{y^{2r}}{ e^{cy}} \; \rightarrow \; 0.$$
	$\;$  \\
	Now for the second sum,
	$$ \log |d_K| + n_K \log(|\gamma| + 2) \; \geq \; \sqrt{\log x} \; \Rightarrow \; |\gamma| \geq -2 + \exp \left(\frac{\sqrt{\log x} - \log |d_K|}{n_K} \right).
	$$ We will write the expression on the right as $u$, i.e. $|\gamma| \geq u$. Note that as $x \rightarrow \infty $, so does $u$. \\  
	Since $x>1$ we have
	\begin{align*}
		 \sum_{ |\gamma| \geq u} \dfrac{ \; x^{-c (\log |d_K| + n_K \log(|\gamma| + 2))^{-1}}}{\gamma^2} & <  \;\; \left( \sum_{|\gamma| \geq u }  \dfrac{1 }{\gamma^2}	\right)  \leq  \left( \sum_{j>u} \; \sum_{j < |\gamma| < j+1} \dfrac{1 }{j^2}	\right)  \\[1.3ex] 
		& \leq \; \sum_{j>u} \frac{c_1 (n_K \log j + \log |d_K|)}{j^2} \;\;\; \text{ (by Lemma \ref{numZero})}\\[1.3ex]
		& \leq c_1 n_K \dfrac{\log u +1}{u} + c_1 (\log |d_K|) \frac{1}{u}
			\end{align*}
		\begin{align*}
		\text{Thus } \;\;\; (\log x)^r  \sum_{ |\gamma| \geq u} \dfrac{ \; x^{-c (\log |d_K| + n_K \log(|\gamma| + 2))^{-1}}}{\gamma^2}  & < c_1 n_K (\log x)^r \dfrac{\log u +1}{u} + c_1 (\log |d_K|)(\log x)^r \frac{1}{u}\\
		& \rightarrow 0  \; \text{ as } \; u \rightarrow \infty \;. 
	\end{align*}
	The above is true since, 
	\begin{align*}
		u &= -2 + \exp \left( \frac{\sqrt{\log x} - \log |d_K|}{n_K}\right) \Rightarrow
		\log(u+2) =\frac{\sqrt{\log x} - \log |d_K|}{n_K}\\
	&\text{and so, } \;	\log x = (n_K \log(u+2) + \log |d_K| )^2 
	\end{align*}
	This completes the proof.
	\end{proof}
	Lastly we compute $\mathfrak{G}(r,x)$. We first note that, using the series expansion of  \eqref{gammaformula2} we have 
\begin{align}\label{Gammakm}
	\nonumber \tilde{\Gamma}_K^{(r)}(s) &= - \frac{r_1}{2^{r+1}} \sum_{k=0}^{\infty} \frac{(-1)^{r} r! \; 2^{r+1}}{(s+2k)^{r+1}} - r_2 \sum_{k=0}^{\infty} \frac{(-1)^{r} r!}{(s+k)^{r+1}}\\[1.2ex]
	&= (-1)^{r+1} r! \left[ \frac{(r_1 + r_2)}{s^{r+1}}  + r_1  \sum_{k = 1}^{\infty}  \frac{1}{(s+2k)^{r+1}} + r_2  \sum_{k = 1}^{\infty}  \frac{1}{(s+k)^{r+1}} \right] .
\end{align} 
In the last line, we have separated the $k=0$ case, and the sums now start from $k=1$.   We have the partial fraction decomposition 
$$\frac{1}{s^{r+1}(s-1)} = \frac{1}{s-1} - \frac{1}{s} - \frac{1}{s^2} - \cdots  - \frac{1}{s^{r+1}}.$$
Recall, we are trying to compute the integral
$$
	\mathfrak{G}(r,x) = \frac{x}{2 \pi i} \int_{c - i \infty}^{c + \infty} \frac{x^{s-1}}{s-1 } \tilde{ \Gamma}^{(r)}_K(s) \; ds \; - \;  \frac{1}{2 \pi i} \int_{c - i \infty}^{c + \infty} \frac{x^{s}}{s } \tilde{ \Gamma}^{(r)}_K(s) \; ds.
$$
Contribution of the first term in \eqref{Gammakm} is
\begin{equation}\label{gamma_m_main}
	(-1)^{r+1}  r! (r_1+r_2) \left[ x-1 - \sum_{k=1}^{r+1} \frac{1}{(k)!} (\log x)^{k}  \right].
\end{equation}
We have used the classical formula (\ref{classical22}) multiple times to get the above.
Now to compute the contribution from the series terms in (\ref{Gammakm}), we first notice the similarity of it to the sum on non-trivial zeros $\sum \frac{(-1)^r r!}{(s- \rho)^{r+1}}$. The residue computations are very similar, with $\rho$ replaced by $-2k$ or $-k$. For the first series, we are computing the integral
$$(-1)^{r+1} r! \; r_1 \left[ \frac{1}{2 \pi i} \int_{c - i \infty}^{c + i \infty} x^{s} \left( \sum_{k = 1}^{\infty}  \frac{1}{(s-1)(s+2k)^{r+1}}  \; -    \frac{1}{s(s+2k)^{r+1}} \right) \; ds  \right]. $$
In the second term, the pole at $s=0$ has residue $ \sum_{k=1}^{\infty} \dfrac{(-1)^{r+2} r! r_1 }{(2k)^{r+1}}$.  \\The pole at $s = -2k$ ( order = $r+1$) has residue 
\begin{align*}
	(-1)^{r+2} r! r_1\;   \lim_{ s \rightarrow (-2k) } \dfrac{d^r}{ds^r}\left(\dfrac{x^s}{s} \right)  &  = (-1)^{r+2} r!  r_1 \;   \lim_{ s \rightarrow (-2k)} \; \sum_{j=0}^{r} \binom{r}{j} \frac{d^{(r-j)}}{ds^{(r-j)}} x^s \cdot \frac{d^j}{ds^j} \frac{1}{s}\\
	&; = (-1)^{r+2} r!  r_1 \; \sum_{j=0}^{r} \binom{r}{j} x^{-2k} (\log x)^{r-j} \frac{(-1)^j j!}{(-2k)^{j+1}}
\end{align*}
Computations for the other terms are very similar. Thus the net contribution from the first series of equation (\ref{Gammakm}) is given by
\begin{align}\label{GammaKm2}
	\nonumber  &	(-1)^{r+1} r! \; r_1 \left[  x \sum_{k=1}^{\infty} \frac{1}{(1+2k)^{r+1}} - \sum_{k=1}^{\infty} \frac{1}{(2k)^{r+1}} \right] + \\[1.3ex] 
	& (-1)^{r+1} r! \; r_1  \sum_{k=1}^{\infty} \left[ \sum_{j=0}^{r} \binom{r}{j} x^{-2k} (\log x)^{r-j}  j! \left( \frac{1}{(2k)^{j+1}} - \frac{1}{(2k+1)^{j+1}}\right) \right]
\end{align}
Looking at this, we can precisely write down the contribution from the second series of (\ref{Gammakm}), by just replacing $2k$ by $k$, and $r_1$ by $r_2$.   We get,
\begin{align}\label{GammaKm3}
	\nonumber  &	(-1)^{r+1} r! \; r_2 \left[  x \sum_{k=1}^{\infty} \frac{1}{(1+k)^{r+1}} - \sum_{k=1}^{\infty} \frac{1}{(k)^{r+1}} \right] + \\[1.3ex] 
	\nonumber	& (-1)^{r+1} r! \; r_2  \sum_{k=1}^{\infty} \left[ \sum_{j=0}^{r} \binom{r}{j} x^{-k} (\log x)^{r-j}  j! \left( \frac{1}{(k)^{j+1}} - \frac{1}{(k+1)^{j+1}}\right) \right] \\[1.3ex] 
	\nonumber & = (-1)^{r+1} r! \; r_2 (x-1) \sum_{k=1}^{\infty} \frac{1}{(1+k)^{r+1}} \; + \;  (-1)^{r} r! \; r_2 \; + \\[1.3ex] 
	& (-1)^{r+1} r! \; r_2  \sum_{k=1}^{\infty} \left[ \sum_{j=0}^{r} \binom{r}{j} x^{-k} (\log x)^{r-j}  j! \left( \frac{1}{(k)^{j+1}} - \frac{1}{(k+1)^{j+1}}\right) \right]
\end{align}
As before summarize these computations to get the following lemma.
\begin{lm}\label{lem:Gamma} For $r \geq 1$ we have,
	$$\lim_{x \rightarrow \infty} \frac{\mathfrak{G}(r,x)}{(x-1)} = \tilde{\Gamma}_K^{(r)}(1)$$ 
\end{lm}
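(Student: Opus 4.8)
The plan is to feed each of the three summands of the partial fraction expansion \eqref{Gammakm} of $\tilde{\Gamma}_K^{(r)}(s)$ through the integral \eqref{Contgamma}, which is exactly the bookkeeping begun in \eqref{gamma_m_main}, \eqref{GammaKm2} and \eqref{GammaKm3}, then divide by $x-1$, let $x\to\infty$, and recognize the limiting constant as the series for $\tilde{\Gamma}_K^{(r)}(1)$.

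First I would collect the three contributions to $\mathfrak{G}(r,x)$. The term $(-1)^{r+1}r!(r_1+r_2)/s^{r+1}$ produces \eqref{gamma_m_main}; dividing by $x-1$ and using $(\log x)^k/(x-1)\to0$ for each $k$, this contributes $(-1)^{r+1}r!(r_1+r_2)$ in the limit. For the first $\Gamma$-series I would rewrite the polar part of \eqref{GammaKm2} as
\[
 x\sum_{k\ge1}\frac{1}{(1+2k)^{r+1}}-\sum_{k\ge1}\frac{1}{(2k)^{r+1}}=(x-1)\sum_{k\ge1}\frac{1}{(1+2k)^{r+1}}+\Bigl(\sum_{k\ge1}\frac{1}{(1+2k)^{r+1}}-\sum_{k\ge1}\frac{1}{(2k)^{r+1}}\Bigr),
\]
so that after division by $x-1$ the first summand survives and the bracketed constant dies, giving a contribution $(-1)^{r+1}r!\,r_1\sum_{k\ge1}(1+2k)^{-(r+1)}$. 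The leftover double sum in \eqref{GammaKm2} is $O_r\bigl(x^{-1}(\log x)^r\bigr)$: each $x^{-2k}\le x^{-1}$ for $k\ge1$, the telescoping differences $(2k)^{-(j+1)}-(2k+1)^{-(j+1)}$ are $O(k^{-(j+2)})$ so the sums over $j$ and $k$ converge absolutely and uniformly in $x>1$, and hence this term vanishes after division by $x-1$. Running the identical computation with $2k\mapsto k$ and $r_1\mapsto r_2$ --- already recorded as \eqref{GammaKm3}, where the telescoping even collapses one of the sums --- the second $\Gamma$-series contributes $(-1)^{r+1}r!\,r_2\sum_{k\ge1}(1+k)^{-(r+1)}$ in the limit. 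Adding the three contributions,
\[
 \lim_{x\to\infty}\frac{\mathfrak{G}(r,x)}{x-1}=(-1)^{r+1}r!\Bigl[(r_1+r_2)+r_1\sum_{k\ge1}\frac{1}{(1+2k)^{r+1}}+r_2\sum_{k\ge1}\frac{1}{(1+k)^{r+1}}\Bigr].
\]
To close, I would reinstate the $k=0$ terms (which contribute exactly $r_1+r_2$) and use $\psi^{(r)}(s)=(-1)^{r+1}\sum_{k\ge0}r!/(s+k)^{r+1}$, $\tilde{\Gamma}_K^{(r)}(s)=\tfrac{r_1}{2^{r+1}}\psi^{(r)}(s/2)+r_2\psi^{(r)}(s)$, and $2^{-(r+1)}(1/2+k)^{-(r+1)}=(1+2k)^{-(r+1)}$ to see that the bracket equals $\tilde{\Gamma}_K^{(r)}(1)$, as claimed.

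The main obstacle is the justification of the term-by-term contour shift for the infinite series in \eqref{Gammakm} --- that one may move the line $\Re s=c$ to the left, pick up the residues at $s=0$, $s=1$ and at the order-$(r+1)$ poles $s=-2k$ (resp.\ $s=-k$), and sum these over all $k$; this is precisely what yields \eqref{GammaKm2}--\eqref{GammaKm3}. It is of the same type as the rectangle estimates carried out for $\mathfrak{Z}(r,x)$ before Lemma \ref{lem:Roots}: on the two horizontal sides and on the far-left side of the rectangle the integrand is $O(x^c/(rT))$ uniformly, and here the extra factors $(s+2k)^{-(r+1)}$ (resp.\ $(s+k)^{-(r+1)}$) provide decay in $k$ strong enough, since $r\ge1$, to interchange the $k$-summation with the contour integral and with $T\to\infty$. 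The only other delicate point, the uniform-in-$x$ control of the double-sum tails, is handled by $x^{-2k}\le x^{-1}$ together with the absolute convergence just noted, so no growth in $x$ leaks through the tail.
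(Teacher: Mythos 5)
Your proposal is correct and follows essentially the same route as the paper: assemble $\mathfrak{G}(r,x)$ from the three pieces \eqref{gamma_m_main}, \eqref{GammaKm2}, \eqref{GammaKm3}, divide by $x-1$, observe that everything except the $(x-1)$-multiple of the constant series is $O_r(n_K(\log x)^{r+1}/x)$, and identify the limiting series with $\tilde{\Gamma}_K^{(r)}(1)$ via the digamma expansion. The only difference is that you spell out the justification for the term-by-term contour shift and the uniform-in-$x$ control of the tails, which the paper leaves implicit but which is handled exactly as you say.
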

\begin{proof}
	We see from \eqref{gamma_m_main}, \eqref{GammaKm2} and \eqref{GammaKm3}
	\begin{align*}
		\frac{\mathfrak{G}(r,x)}{(x-1)} &= (-1)^{r+1} r! (r_1 + r_2)  + (-1)^{r+1} r! \; r_1 \sum_{k=1}^{\infty} \frac{1}{(1+2k)^{r+1}} \\ 
		&+ (-1)^{r+1} r! \; r_2  \sum_{k=1}^{\infty} \frac{1}{(1+k)^{r+1}} + O_{r} \left( \frac{n_K (\log x)^{r+1}}{x} \right).
	\end{align*}
	Here we are using the notation $O_{r}(.)$ to mean the implicit constant depends on $r$. Thus,
	\begin{align*}
	\lim_{x \rightarrow \infty}	\frac{\mathfrak{G}(r,x)}{(x-1)} &= (-1)^{r+1} r! (r_1 + r_2)  + (-1)^{r+1} r! \; r_1 \sum_{k=1}^{\infty} \frac{1}{(1+2k)^{r+1}} \\ 
		&+ (-1)^{r+1} r! \; r_2  \sum_{k=1}^{\infty} \frac{1}{(1+k)^{r+1}}\\
		& = (-1)^{r+1} r! \left[ r_1 \sum_{k=0}^{\infty} \frac{1}{(1+2k)^{r+1}} +  r_2  \sum_{k=0}^{\infty} \frac{1}{(1+k)^{r+1}} \right] \\
		& = \tilde{\Gamma}_K^{(r)}(1).
	\end{align*}
\end{proof}
\subsection{Proof of Theorem \ref{thm:EKgenIhara}}
From \eqref{PhiEuler} and \eqref{Contotal} we get,
\begin{align*}
	(x-1)(-1)^r \Phi_K(r,x) &= \mathfrak{F}(r,x) + \mathfrak{Z}(r,x) + \mathfrak{G}(r,x) \\
	(-1)^r \Phi_K(r,x) & = \frac{ \mathfrak{F}(r,x)}{(x-1)} + \frac{ \mathfrak{Z}(r,x)}{(x-1)} + \frac{ \mathfrak{G}(r,x)}{(x-1)}
\end{align*}
Now by Lemma \ref{lem:Mainterm}, we have 
\begin{align*}
	(-1)^r \left[ \Phi_K(r,x) - f(r,x) \right] = \frac{ \mathfrak{Z}(r,x)}{(x-1)} + \frac{ \mathfrak{G}(r,x)}{(x-1)}
\end{align*}
Taking limit $x \rightarrow \infty$ and using Lemma \ref{lem:Roots} and Lemma \ref{lem:Gamma} we get, for $r \geq 1$,
\begin{align*}
	\lim_{x \rightarrow \infty}(-1)^r \left[ \Phi_K(r,x) - f(r,x) \right] &= (-1)^{r+1} r!  \sum \frac{1}{(1- \rho)^{r+1}}  + \tilde{\Gamma}_K^{(r)}(1).
\end{align*}
And so, by \eqref{gamma_kr}
\begin{align*}
	\lim_{x \rightarrow \infty}(-1)^r \left[ \Phi_K(r,x) - f(r,x) \right]  &= r! [ (-1)^{r+1} - \gamma_{K,r}] \\
	\gamma_{K,r} + (-1)^{r} &= \frac{(-1)^{r+1}}{r!} \lim_{x \rightarrow \infty} \left( \frac{}{} \Phi_K(r,x) - f(r,x) \right).  
\end{align*}
Combining this, with the known $r=0$ case due to Ihara, as mentioned in \eqref{Ihara_lim_form}, completes the proof. \hspace{13.8cm} $\qed$
 \newpage
\bibliographystyle{amsplain}
\bibliography{EulerKron}
\end{document}